\numberwithin{equation}{section}
\newtheorem{thm}{Theorem}[section]
\newtheorem{cor}{Corollary}[section]
\newtheorem{note}{Note}[section]
\newtheorem{pro}{Proposition}[section]
\newtheorem{defn}{Definition}[section]
\begin{document}
\vspace{-2cm}
\markboth{R. Rajkumar and M.Gayathri}{Spectra of $(H,H')-$merged subdivision graph of a graph}
\title{\LARGE\bf Spectra of $(H_1,H_2)-$merged subdivision graph of a graph}
\author{R. Rajkumar\footnote{e-mail: {\tt rrajmaths@yahoo.co.in}},\ \ \
M. Gayathri\footnote{e-mail: {\tt mgayathri.maths@gmail.com}, }\ \\
{\footnotesize Department of Mathematics, The Gandhigram Rural Institute -- Deemed to be University,}\\ \footnotesize{Gandhigram -- 624 302, Tamil Nadu, India}\\[3mm]
}
\date{ }
\maketitle
\begin{abstract}
In this paper, we define a ternary graph operation which  generalizes the construction of subdivision graph, $R-$graph, central graph. Also, it generalizes the construction of overlay graph (Marius Somodi \emph{et al.}, 2017),  and consequently, $Q-$graph, total graph, and quasitotal graph. We denote this new graph by 
$[S(G)]^{H_1}_{H_2}$, where $G$ is a graph and, $H_1$ and $H_2$ are suitable graphs corresponding to $G$. Further,  we define several new unary graph operations which becomes particular cases of this construction.
We determine the Adjacency and Laplacian spectra of $[S(G)]^{H_1}_{H_2}$ for some classes of graphs $G$, $H_1$ and $H_2$. From these results, we derive the $L$-spectrum of the graphs obtained by the unary graph operations mentioned above.
As applications, these results enable us to compute the number of spanning trees and Kirchhoff index of these graphs. 
\paragraph{Keywords:}  Adjacency spectrum; Laplacian spectrum; subdivision graph; spanning trees; Kirchhoff index\\
\textbf{2010 Mathematics Subject Classification:} 05C50, 05C76 
\end{abstract}

	\section{Introduction}\label{sec1}

All the graphs considered in this paper are undirected and simple. $K_n$, $C_n$ and $P_n$ denote the complete graph, the cycle graph and the path graph on $n$ vertices, respectively. The complete bipartite graph whose partite sets having sizes  $p$ and $q$ is denoted by $K_{p,q}$.    $J_{n\times m}$ denotes the matrix of size $n\times m$ in which all the entries are 1. We will denote $J_{n\times n}$ simply by $J_n$.

The study of the properties of graphs is an essential one, 	since several real life problems can be modeled by using graphs. One of the approach used in spectral graph theory is, by associating matrices to the given graphs and by determining their eigenvalues and eigenvectors through which the properties of the graphs can be described. 

For a graph $G=(V,E)$ with $V(G) = \{v_1,v_2,\ldots,v_n\}$ and $E(G)=\{e_1,e_2,\ldots,e_m\}$, the \emph{adjacency
	matrix of $G$} is the $n \times n$ matrix $A(G)=[a_{ij}]$, where $a_{ij}=1,$ if $i\neq j$ and, $v_i$ and $v_j$ are adjacent in $G$; 0, otherwise.
The \textit{vertex-edge incidence matrix of $G$} is the $n\times m$ matrix  $B(G)=[b_{ij}]$, where $b_{ij}=1,$ if the vertex $v_i$ is incident with the edge $e_j$; 0, otherwise.
The \textit{degree matrix $D(G)$ of $G$} is the diagonal matrix $diag(d_1,d_2,\ldots,d_n)$, where $d_i$ denotes the degree of the vertex $i$.
The \emph{Laplacian matrix} $L(G)$ of $G$ is the matrix $D(G)-A(G)$ and the \emph{signless Laplacian matrix} $\mathcal Q(G)$ of $G$ is the matrix $D(G)+A(G)$.  Note that $\mathcal Q(G)=B(G)B(G)^T$.
The characteristic polynomials of $A(G)$, $L(G)$ and $\mathcal Q(G)$ are denoted by $P_G(x)$, $L_G(x)$ and $\mathcal Q_G(x)$, respectively. The eigenvalues of $A(G)$, $L(G)$ and $Q(G)$, are said to be the \textit{$A$-spectrum, $L$-spectrum and $\mathcal Q$-spectrum of $G$}, respectively.
Two graphs are said to be \textit{$A$-cospectral} (resp. \textit{$L$-cospectral,  $\mathcal Q$-cospectral}) if they have same the $A$-spectrum (resp. $L$-spectrum, $\mathcal Q$-spectrum). The $A$-specturm, $L$-spectrum and $\mathcal Q$-spectrum of a graph $G$ with $n$ vertices are denoted by $\lambda_i(G),\mu_i(G)$ and $\nu_i(G)$, $i=1,2,\ldots,n,$ respectively.

If $\lambda_1,\lambda_2,\ldots,\lambda_t$ are the distinct eigenvalues of a a matrix $M$ with multiplicity $m_1,m_2,\ldots, m_t$, respectively, then the eigenvalues of $M$ are denoted by $\lambda_1^{m_1},\lambda_2^{m_2}, \ldots, \lambda_t^{m_t}$. If $m_i=1$, for some $i$, then $\lambda_i^{m_i}$ is simply denoted by $\lambda_i$.

Two graphs are said to be \emph{commute} if their adjacency matrices commute. Some properties, examples of commuting graphs are studied in \cite{akbari2007,akbari2009}.
For example, the complete graph $K_n$ commutes with any regular graph of order $n$ and the complete bipartite graph $K_{p,p}$ commutes with any of its regular spanning subgraph (See, \cite[Proposition 2.3.6]{hei2011}).

$A$-spectrum, $L$-spectrum
of a graph are powerful tools for analyzing the properties of the corresponding graph.
Apart from graph theory, the determination of the various spectra of graphs has found applications in many other fields such as physics, chemistry, computer science etc.; see, for instance \cite{bapat2010,brouwer2012, cvetkovic2011}.

Let $G$ be a connected graph with $V(G) = \{ 1,2,\ldots, n\}$. Then the resistance distance $r_{ij}$
between vertices $i$ and $j$ of $G$ is defined to be the effective resistance between nodes $i$ and $j$ as
computed with Ohm's law when all the edges of $G$ are considered to be unit resistors. The \textit{Kirchhoff index} $Kf(G)$ of $G$  is defined as $Kf(G ) = \sum_{i<j}r_{ij}$ \cite{klein1993}.
The resistance distance and
the Kirchhoff index attracted extensive attention due to their wide
applications in electric network theory, physics, chemistry, etc. and the Kirchoff index of graphs constructed by graph operations was also obtained; see \cite{bonchev1994,gao2012,qliu2016, xiao2003, yang2014, zhang2013, zhou2008}.



A natural question arise is "to what extent the spectrum of a given graph can be expressed in terms the spectrum of some other graphs by using graph operations ?".
In this point of view, to construct graphs from the given graphs, several  graph operations were defined  in literature such as the union, the complement, Cartesian product, the Kronecker product, the NEPS, the corona, the edge corona, the join, deletion of a vertex, insertion/deletion of an edge,  etc. and the results on the spectra of the graphs obtrined by using these graph operations were obtained. See, \cite{ barik2007, barik2015, cardoso2013, cui2012,cvetkovic1975, gao2012, hou2010, laali2016, lan2015,liu2014,wang20131} and the references therein. In addition, several unary graph operations were defined in the literature. Some of them are given below for the easy reference of the reader:
The \textit{line graph  $\mathcal{L}(G)$ of $G$} is the graph having $E(G)$ as its vertex set and two vertices are adjacent if and only if the corresponding edges are adjacent in $G$. The  \textit{subdivision graph} $S(G)$ of  $G$ is the graph obtained by inserting a new vertex into every edge of $G$. The \textit{middle graph} or $Q-$\textit{graph} $Q(G)$ of $G$  is the graph obtained from G by inserting a new vertex into each edge of $G$, and joining by edges those pairs of new vertices which lie on adjacent edges of $G$. The \emph{total graph} $T(G)$ of $G$ is the graph obtained by taking one copy of $R(G)$ and joining the new vertices which lie on the adjacent edges of $G$.  The \emph{quasitotal graph $QT(G)$ of $G$} is the graph obtained by taking one copy of $Q(G)$ and joining the vertices which are not adjacent in $G$.  The determination of $A$-spectra of these graphs have been made in \cite{cvetkovic1975,fiedler1973, wang20132,xie2016}. The \emph{central graph  $C(G)$ of $G$} is the graph obtained by taking one copy of $S(G)$ and joining the vertices which are not adjacent in $G$.

In \cite{somody2017}, Marius Somodi \emph{et al.} defined the following graph operation which generalizes
the constructions of the middle, total, and quasitotal graphs:
Let $G$ and $G'$ be two graphs having $n$ vertices with same vertex labeling $\{v_1,v_2,\ldots,v_n\}$. Then the \emph {overlay of $G$ and $G'$}, denoted by $G\ltimes G'$ is the graph obtained by taking $ Q(G)$ and joining the vertices $v_i$ and $v_j$ of $G$ if and only if $v_i$ and $v_j$ are adjacent in $G'$. 
Therein, they obtained the characteristic polynomial of adjacency and Laplacian matrices of overlay of two commuting graphs. Among the other results, they determined the number of spanning trees and Kirchhoff index of overlay of two graphs. Also they derived the $A$-spectrum and $L$-spectrum of $Q-$graph, total graph and quasitotal graph of a graph.

By observing the construction of the above mentioned unary graph operations and overlay graph operation, we define a new ternary graph operation namely, $(H_1,H_2)$-merged subdivision graph of $G$ which is obtained from the subdivision graph of $G$ by combining the suitable graphs $H_1$ and $H_2$. Consequently, this construction generalizes some graph operations defined in the literature, and  enables us to define some new unary operations in Section 2. In Section 3, we obtain the $A$-spectrum and $L$-spectrum of the $(H_1,H_2)$-merged subdivision graph for some classes of graphs $G$, $H_1$ and $H_2$. In addition, we deduce the $L$-spectra of the overlay graph for some class of constituting graphs.  In Section 4, we obtain the number of spanning trees and the Kirchhoff index of  the $(H_1,H_2)$-merged subdivision graph for some classes of graphs $G$, $H_1$ and $H_2$.

\section{$(H_1,H_2)$-merged subdivision graph of $G$}

First we define the following ternary graph operation:
\begin{defn}\label{def1}
	\normalfont
	Let $G$ be a graph with $V(G)=\{v_1,v_2,\ldots,v_n\}$ and $E(G)=\{e_1,e_2,\ldots,e_m\}$. Let $H_1$ and $H_2$ be two  graphs with $V(H_1)=\{u_1,u_2,\ldots,u_n\}$ and $V(H_2)=\{w_1,w_2,\ldots,w_m\}$. The \emph{ $(H_1,H_2)$-merged subdivision graph of $G$}, denoted by $[S(G)]^{H_1}_{H_2}$, is the graph obtained by taking $S(G)$ and joining the vertices $v_i$ and $v_j$ if and only if the vertices $u_i$ and $u_j$ are adjacent in $H_1$, and joining the new vertices which lie on the edges $e_t$ and $e_s$ if and only if $w_t$ and $w_s$ are adjacent in $H_2$ for $i,j=1,2,\ldots,n$ and $t,s=1,2,\ldots,m$. 
\end{defn}


Clearly, if $G$ has $n$ vertices and $m$ edges, and $H_1$ and $H_2$ have $m_1$ and $m_2$ edges, respectively, then $[S(G)]^{H_1}_{H_2}$ has $n+m$ vertices and $2m+m_1+m_2$ edges.

We denote the graphs $[S(G)]_H^{\overline K_n}$ and $[S(G)]_{\overline K_m}^H$  simply by $[S(G)]_H$ and $[S(G)]^H$, respectively. The above construction is illustrated in Figure~\ref{HcombinedRgraphexample}.


%
\begin{figure}[ht]
	\begin{center}
		\includegraphics[scale=1]{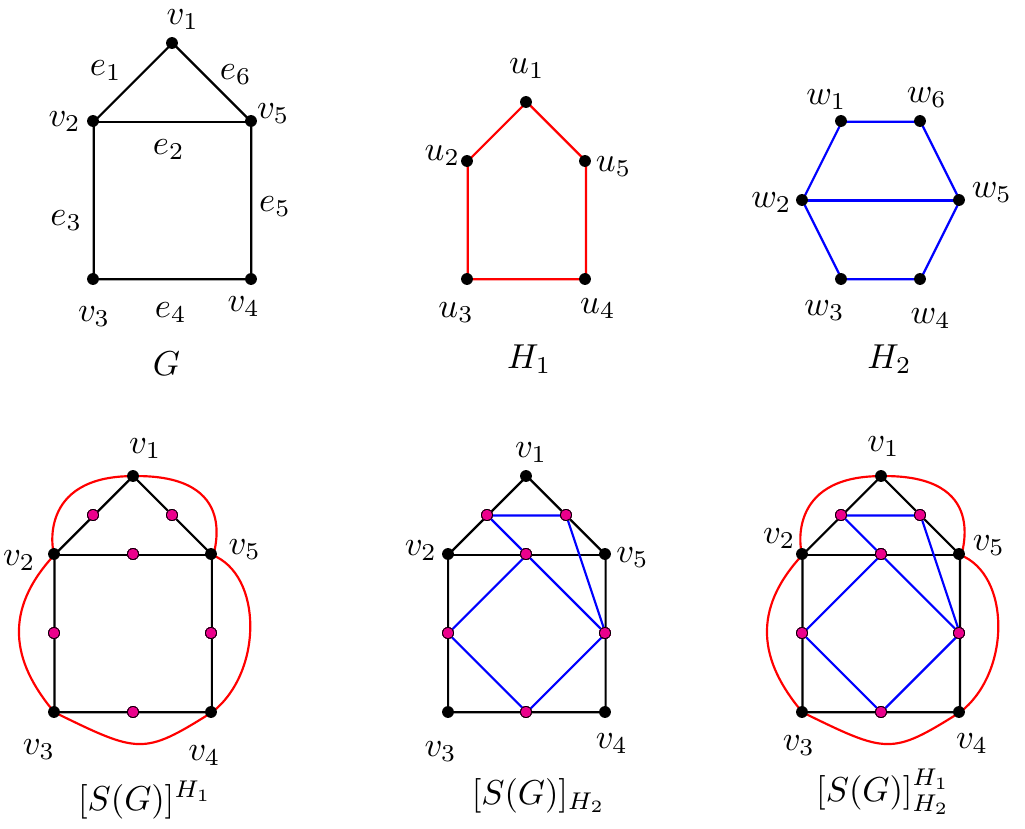}\caption{An example of $(H_1,H_2)$-merged subdivision graph of a graph $G$}\label{HcombinedRgraphexample}
	\end{center}
\end{figure}

The  construction used in Definition~\ref{def1} generalizes
many graph constructions: $S(G) \cong [S(G)]^{\overline K_n}_{\overline K_m}$,  $R(G) \cong [S(G)]^G$ and $C(G) \cong [S(G)]^{\overline G}$. Also note that the graph $[S(G)]^{H}_{\mathcal L(G)}$ is  the graph overlay of $G$ and $H$. Consequently, $Q(G)\cong [S(G)]_{\mathcal L(G)}$, $T(G) \cong [S(G)]^{G}_{\mathcal L(G)}$, $QT(G) \cong [S(G)]^{\overline{ G}}_{\mathcal L(G)}$.

Some of the special cases of Definition~\ref{def1} enable us to define some interesting unary graph operations:

%
\begin{defn}\label{unary combined}
	Let $G$ be a graph with $V(G)=\{v_1,v_2,\ldots,v_n\}$.
	\normalfont
	\begin{enumerate}[(1)]
		\item The \emph{point complete subdivision graph of $G$} is the graph obtained by taking one copy of $S(G)$ and joining all the vertices $v_i,v_j\in V(G)$. 	
		
		
		\item 	The \emph{$Q-$complemented  graph of $G$} is the graph obtained by taking one copy of $S(G)$ and joining the new vertices which lie on the non-adjacent edges of $G$.
		
		\item 	The \emph{total complemented graph of $G$} is the graph obtained by taking one copy of $R(G)$ and joining the new vertices lie which on the non-adjacent edges of $G$.
		
		\item The \emph{quasitotal complemented graph of $G$} is the graph obtained by taking one copy of $Q-$complemented graph of $G$ and joining all the vertices $v_i,v_j\in V(G)$ which are not adjacent in $G$.
		
		\item The \emph{complete $Q-$complemented graph of $G$} is the graph obtained by taking one copy of $Q-$complemented graph of $G$ and joining all the vertices of $v_i,v_j\in V(G)$.
		
		\item The \emph{complete subdivision graph of $G$} is the graph obtained by taking one copy of $S(G)$ and joining the all the new vertices which lie on the edges of $G$.
		
		\item The  \emph{complete $R-$graph of $G$} is the graph obtained by taking one copy of $R(G)$ and joining all the new  vertices which lie on the edges of $G$.
		
		\item 	The  \emph{complete central graph of $G$} is the graph obtained by taking one copy of central graph of $G$ and joining all the new vertices which lie on the edges of $G$.
		
		\item 	The  \emph{fully complete subdivision graph of $G$} is the graph obtained by taking one copy of $S(G)$ and joining all the vertices of $G$ and joining all the new vertices which lie on the edges of $G$.
	\end{enumerate}
\end{defn}

Notice that the graphs mentioned in Definitions~\ref{unary combined}(1)-(9) are isomorphic to $[S(G)]^{K_n}$, $[S(G)]_{\overline{\mathcal L(G)}}$, $[S(G)]_{\overline {\mathcal L(G)}}^{G}$,    $[S(G)]_{\overline { \mathcal L(G)}}^{\overline G}$, $[S(G)]_{\overline {\mathcal L(G)}}^{K_n}$, $[S(G)]_{K_m}$, $[S(G)]_{ K_m}^{G}$, $[S(G)]_{ K_m}^{\overline G}$, $[S(G)]_{ K_m}^{K_n}$, respectively. The structures of these graphs for $G=C_4$  are shown in Figures~\ref{fhhcombinedsubdivisionc4}(a)-(i), respectively.

\begin{figure}[h!]
	\begin{center}
		\includegraphics[scale=0.8 ]{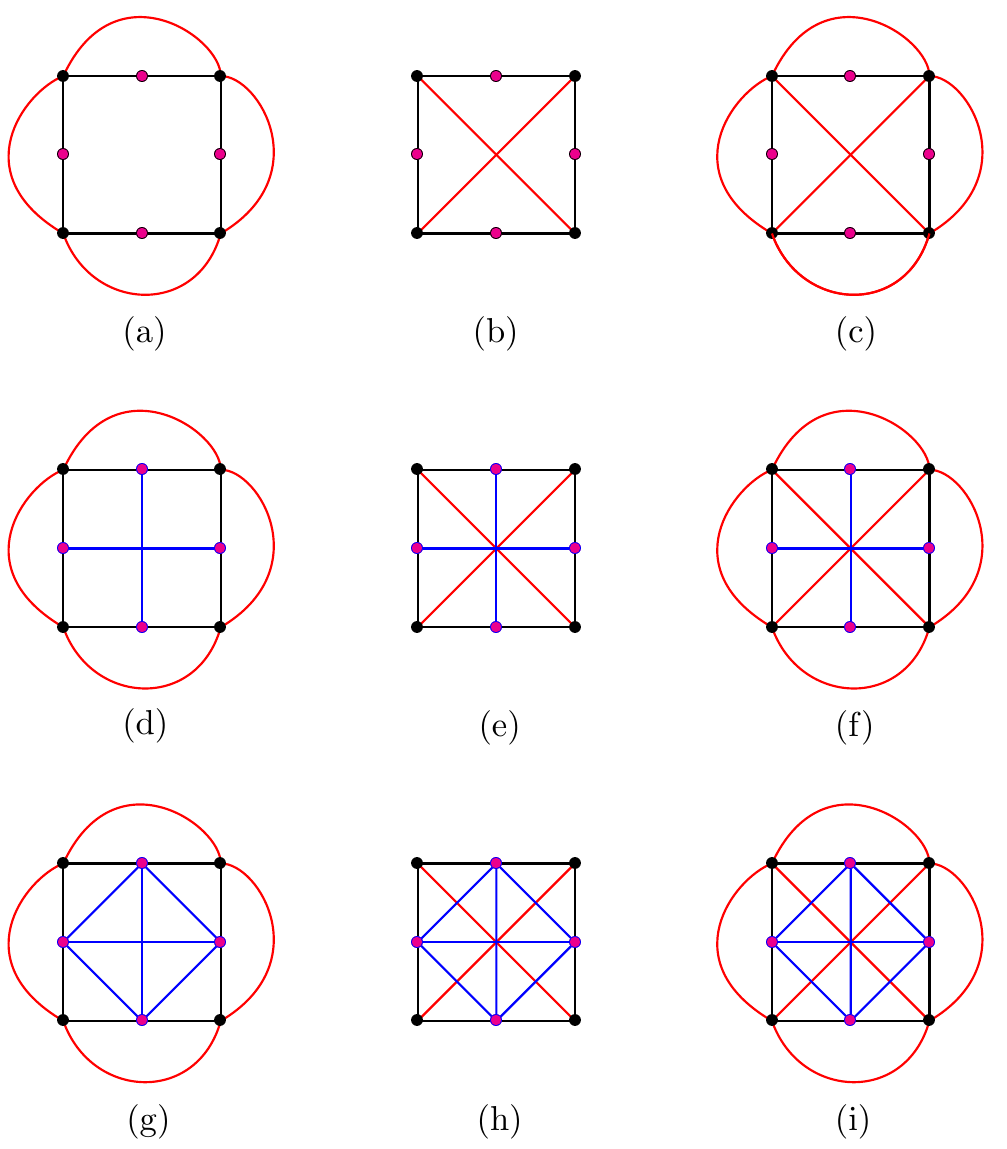}\caption{(a) The point complete subdivision graph of $C_4$, (b) The $Q-$complemented graph of $C_4$, (c) The total complemented graph of $C_4$, (d) The quasitotal complemented graph of $C_4$, (e) The complete $Q-$complemented graph of $C_4$, (f) The complete subdivision graph of $C_4$, (g) The complete $R-$graph of $C_4$, (h) The complete central graph of $C_4$, (i) The fully complete subdivision graph of $C_4$}\label{fhhcombinedsubdivisionc4}
	\end{center}
\end{figure}


\section{$A$-spectra and $L$-spectra of $[S(G)]^{H_1}_{H_2}$}\label{sec2}

In this section, we compute the $L$-spectrum of $[S(G)]^{H_1}_{H_2}$ for some classes of graphs $G$, $H_1$ and $H_2$. 

The Laplacian matrix of $[S(G)]^{H_1}_{H_2}$ is
\begin{eqnarray}\label{lmatrix of h1h2merged subdivision}
	\begin{bmatrix}
		L(H_1)+D(G) & -B(G) \\ -B(G)^T & L(H_2)+2I_m
	\end{bmatrix}
\end{eqnarray}

We first state the following results which will be used later.
\begin{thm}\label{schur}(\cite{bapat2010})
	Let $A$ be an $n\times n$ matrix partitioned as
	\[A=\begin{bmatrix}
	A_1&A_2\\A_3&A_4
	\end{bmatrix},
	\]where $A_1,A_4$ are square matrices. If $A_1,A_4$ are invertible, then	
	\begin{align*}
		\left|A\right|&=\left|A_4\right| \left|A_1-A_2A_4^{-1}A_3\right|=\left|A_1\right| \left|A_4-A_3A_1^{-1}A_2\right|.
	\end{align*}
\end{thm}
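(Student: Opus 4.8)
The plan is to prove both determinant identities by block Gaussian elimination, that is, by exhibiting explicit block-triangular factorizations of $A$ and then invoking two standard facts: the multiplicativity of the determinant, $|XY|=|X|\,|Y|$, and the block-triangular determinant formula, namely $\begin{vmatrix} X & 0 \\ Y & Z \end{vmatrix} = |X|\,|Z|$ (and symmetrically for the block upper-triangular case) whenever $X$ and $Z$ are square. The hypotheses that $A_1$ and $A_4$ be invertible enter precisely to make $A_1^{-1}$ and $A_4^{-1}$ available for the respective eliminations.

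First I would record the block-triangular determinant lemma, since it is the only ingredient that is not pure bookkeeping. It is standard and follows, for instance, by Laplace (cofactor) expansion, reducing the lower-right block to the identity one column at a time; I would simply state it and refer to it as classical. With this lemma in hand, both halves of the theorem are immediate consequences of the appropriate factorization together with multiplicativity of $\det$.

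For the first identity, using the invertibility of $A_4$, I would verify the factorization
\[
\begin{bmatrix} A_1 & A_2 \\ A_3 & A_4 \end{bmatrix}
= \begin{bmatrix} I & A_2 A_4^{-1} \\ 0 & I \end{bmatrix}
  \begin{bmatrix} A_1 - A_2 A_4^{-1} A_3 & 0 \\ A_3 & A_4 \end{bmatrix}
\]
by multiplying out the four blocks. The left factor is block upper-triangular with identity diagonal blocks, hence has determinant $1$, while the right factor is block lower-triangular, so the lemma gives its determinant as $|A_1 - A_2 A_4^{-1} A_3|\,|A_4|$; multiplicativity then yields $|A| = |A_4|\,|A_1 - A_2 A_4^{-1} A_3|$. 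Symmetrically, using the invertibility of $A_1$, the factorization
\[
\begin{bmatrix} A_1 & A_2 \\ A_3 & A_4 \end{bmatrix}
= \begin{bmatrix} I & 0 \\ A_3 A_1^{-1} & I \end{bmatrix}
  \begin{bmatrix} A_1 & A_2 \\ 0 & A_4 - A_3 A_1^{-1} A_2 \end{bmatrix}
\]
gives $|A| = |A_1|\,|A_4 - A_3 A_1^{-1} A_2|$, completing the proof.

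The only genuine work is verifying the two factorizations, which is routine block-matrix multiplication: for example, the $(1,1)$ block of the first product is $(A_1 - A_2 A_4^{-1} A_3) + A_2 A_4^{-1} A_3 = A_1$, the $(2,1)$ block is $A_3$, and the off-diagonal blocks recover $A_2$ and $A_4$, and analogously for the second product. I anticipate no real obstacle here; the one point worth flagging is that each of the two identities requires only one of the two invertibility hypotheses (the first needs $A_4$ invertible, the second needs $A_1$ invertible), so the two formulas are not merely cosmetic rearrangements but correspond to eliminating in the two different orders.
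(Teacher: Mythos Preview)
Your proof is correct and is in fact the standard argument via block $LU$ (or $UL$) factorization; the verifications of the two factorizations are routine and your remark that each identity uses only one of the two invertibility hypotheses is accurate. The paper itself does not prove this theorem at all: it is quoted as a known result from Bapat's textbook and used as a black box throughout, so there is no ``paper's own proof'' to compare against.
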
	

%

%
%
%

To obtain the $L$-spectrum of $[S(G)]^{H}$, $[S(G)]_{K_m}^{H}$  and $[S(G)]_{\overline {\mathcal L(G)}}^{H}$, where $G$ is a regular, first we obtain the characteristic polynomial of a partitioned matrix:
\begin{pro}\label{determinant matrix including JBBT}
	Let $A\in M_n(\mathbb R)$ and $B\in M_{n\times m}(\mathbb R)$. If the sum of all entries in each row of $B$ is equal to $r$, then the characteristic polynomial of the matrix 
	\[M=\begin{bmatrix}
	A & B\\B^T& t_1I_m+t_2J_m+t_3B^TB	
	\end{bmatrix},
	\]  is 
	\[(x-t_1)^{m-n}\times \left|\left\{(x-t_1)I_n-t_3BB^T-\frac{t_2}{2}rJ_n\right\}(xI_n-A)-BB^T\right|.
	\]
\end{pro}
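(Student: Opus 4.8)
The plan is to expand the characteristic polynomial $\det(xI_{n+m}-M)$ by a Schur complement and then collapse the resulting $m\times m$ determinant to an $n\times n$ one. Writing
\[
xI_{n+m}-M=\begin{bmatrix} xI_n-A & -B\\ -B^T & (x-t_1)I_m-t_2J_m-t_3B^TB\end{bmatrix},
\]
I would apply Theorem~\ref{schur} with the block $A_1=xI_n-A$, treating $x$ as a generic value so that $A_1$ is invertible and then extending the final polynomial identity to all $x$ by continuity. This gives
\[
\det(xI_{n+m}-M)=\det(xI_n-A)\cdot\det\!\big((x-t_1)I_m-t_2J_m-t_3B^TB-B^T(xI_n-A)^{-1}B\big).
\]

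The main obstacle is the term $t_2J_m$: unlike $t_3B^TB$ and $B^T(xI_n-A)^{-1}B$, it is not visibly of the form $B^T(\cdot)B$, so a naive rank count would leave an $(n+1)\times(n+1)$ determinant rather than the claimed $n\times n$ one. The observation that resolves this is that $B$ carries the incidence structure of the subdivision graph: besides the stated row-sum relation $B\mathbf 1_m=r\mathbf 1_n$, each column of $B$ sums to $2$, that is $B^T\mathbf 1_n=2\mathbf 1_m$. Hence $\mathbf 1_m=\tfrac12 B^T\mathbf 1_n$, and therefore $J_m=\mathbf 1_m\mathbf 1_m^T=\tfrac14 B^TJ_nB$. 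This lets me fold the $J_m$ term into the others, rewriting the argument of the second determinant as $(x-t_1)I_m-B^T\tilde K B$, where $\tilde K=t_3I_n+\tfrac{t_2}{4}J_n+(xI_n-A)^{-1}$.

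The reduction is then routine. I would invoke Sylvester's determinant identity $\det(cI_m-UV)=c^{\,m-n}\det(cI_n-VU)$, itself a consequence of Theorem~\ref{schur}, with $c=x-t_1$, $U=B^T\tilde K$ and $V=B$, so that $VU=BB^T\tilde K$, obtaining
\[
\det\!\big((x-t_1)I_m-B^T\tilde K B\big)=(x-t_1)^{m-n}\,\det\!\big((x-t_1)I_n-BB^T\tilde K\big).
\]
Finally I would multiply this by $\det(xI_n-A)$, absorbing it on the right, and simplify. Here the two incidence relations combine into $BB^TJ_n=2rJ_n$ (since $B^T\mathbf 1_n=2\mathbf 1_m$ and then $B\mathbf 1_m=r\mathbf 1_n$), which converts the $\tfrac{t_2}{4}J_n$ inside $\tilde K$ into the advertised $\tfrac{t_2}{2}rJ_n$, while $BB^T(xI_n-A)^{-1}(xI_n-A)=BB^T$ accounts for the trailing $-BB^T$. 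This produces exactly
\[
(x-t_1)^{m-n}\,\det\!\Big(\big\{(x-t_1)I_n-t_3BB^T-\tfrac{t_2}{2}rJ_n\big\}(xI_n-A)-BB^T\Big),
\]
as claimed.

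I would close with a cautionary remark: the factor $\tfrac12$ and the passage from $J_m$ to $J_n$ genuinely rely on the column-sum-$2$ property of $B$. The row-sum hypothesis alone does not suffice, as already the $1\times1$ case ($B=[r]$) shows that the identity fails unless $r=2$. Thus the statement is to be read with $B$ an incidence-type matrix, which is exactly the setting $B=B(G)$ in which the proposition is applied.
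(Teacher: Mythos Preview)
Your argument is correct and arrives at the same formula, but by a different mechanism than the paper. The paper does not take a Schur complement first and then invoke Sylvester; instead it performs two block row operations on $xI_{n+m}-M$, namely $R_2\to R_2-t_3B^TR_1$ and then $R_2\to R_2-\tfrac{t_2}{2}J_{m\times n}R_1$, which clear the lower-right block down to $(x-t_1)I_m$, and only then applies the Schur complement of Theorem~\ref{schur} with respect to that scalar block. Your route (Schur with respect to $xI_n-A$, then Sylvester's identity) is a clean alternative; it has the advantage that the reduction from an $m\times m$ to an $n\times n$ determinant is packaged into a single named identity rather than being implicit in the combination of row operations and the scalar $(x-t_1)I_m$.

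Your closing remark is well taken and in fact applies equally to the paper's own proof: the second row operation there produces $(x-t_1)I_m$ in the lower-right block only because $J_{m\times n}B=2J_m$, i.e.\ because every column of $B$ sums to $2$. That assumption is not part of the stated hypothesis (which speaks only of row sums), and your $1\times1$ counterexample shows it cannot be dropped. So you have correctly identified an implicit hypothesis that both arguments require; the proposition should be read with $B$ an incidence-type matrix, which is precisely how it is used downstream with $B=B(G)$.
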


\begin{proof}
	\begin{eqnarray*}
	\begin{vmatrix}
		xI_n-A & -B\\-B^T& (x-t_1)I_m-t_2J_m-t_3B^TB
	\end{vmatrix}  &=&
	\begin{vmatrix}
		xI_n-A & -B\\-B^T-\displaystyle t_3B^T\left(xI_n-A\right)& (x-t_1)I_m-t_2J_{m}
	\end{vmatrix}\\
&&~~~~~~~~~~~~~~~~~~~~~~~~~~~~~~~~~~~~~~~~\left(  R_2\rightarrow R_2-t_3B^TR_1\right) 
	\\ &=&
	\begin{vmatrix}
		xI_n-A & -B\\-B^T-\left\{\displaystyle t_3B^T+\frac{t_2}{2}J_{m\times n}\right\}\left(	xI_n-A\right)& (x-t_1)I_m
	\end{vmatrix}\\
 &&~~~~~~~~~~~~~~~~~~~~~~~~~~~~~~~~~\left( R_2\rightarrow R_2-\frac{t_2}{2}J_{n\times m}R_1\right) .
\end{eqnarray*}
	So, the result follows from Theorem~\ref{schur}.
\end{proof}

The following result gives a characterization of commuting matrices in terms of their eigenvectors. 
\begin{pro}(\cite[Proposition 2.3.2]{hei2011})\label{eigenvectors of commting matrices}
	Let $A_1,A_2,\ldots,A_m$ be symmetric matrices of order $n$. Then the following are equivalent.
	\begin{enumerate}[(1)]
		\item $A_iA_j=A_jA_i$, $\forall i,j\in\{1,2,\ldots,m\}$.
		\item There exists an orthonormal basis $\{x_1,x_2,\ldots, x_n\}$ of $\mathbb R^n$ such that $x_1,x_2,\ldots, x_n$ are eigenvectors of $A_i$, $\forall i=1,2,\ldots,m$.
	\end{enumerate}
\end{pro}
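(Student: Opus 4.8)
The plan is to prove the two implications separately, treating $(2)\Rightarrow(1)$ as the routine direction and $(1)\Rightarrow(2)$ as the substantive one, which I would establish by induction on the order $n$, using the fact that commuting symmetric operators preserve one another's eigenspaces.

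For $(2)\Rightarrow(1)$, let $P$ be the orthogonal matrix whose columns are the common orthonormal eigenvectors $x_1,x_2,\ldots,x_n$. Since each $x_k$ is an eigenvector of every $A_i$, we may write $A_i=PD_iP^T$, where $D_i$ is the diagonal matrix recording the eigenvalues of $A_i$ on this common basis. Diagonal matrices commute, so $A_iA_j=PD_iD_jP^T=PD_jD_iP^T=A_jA_i$ for all $i,j$. This direction is immediate.

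For $(1)\Rightarrow(2)$, I would argue by strong induction on $n$. If every $A_i$ is a scalar multiple of $I_n$, then any orthonormal basis of $\mathbb{R}^n$ serves. Otherwise choose an index, say $A_1$, that is not scalar; by the spectral theorem its distinct eigenvalues $\lambda$ yield an orthogonal decomposition $\mathbb{R}^n=\bigoplus_{\lambda}V_\lambda$ into eigenspaces, each of dimension strictly less than $n$. The key step is that each $A_i$ leaves every $V_\lambda$ invariant: if $A_1v=\lambda v$, then $A_1(A_iv)=A_iA_1v=\lambda(A_iv)$, so $A_iv\in V_\lambda$. Restricting to a fixed $V_\lambda$, the operators $A_i|_{V_\lambda}$ are symmetric with respect to the induced inner product and still commute pairwise, so the induction hypothesis applied in dimension $\dim V_\lambda<n$ produces a common orthonormal eigenbasis of $V_\lambda$ for the family $\{A_i|_{V_\lambda}\}$. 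These vectors are automatically eigenvectors of $A_1$ with eigenvalue $\lambda$. Since the $V_\lambda$ are mutually orthogonal and together span $\mathbb{R}^n$, concatenating the bases over all $\lambda$ yields the desired common orthonormal eigenbasis of $\mathbb{R}^n$.

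The main obstacle is the invariance step together with the correct setup of the induction: one must verify that the restricted operators $A_i|_{V_\lambda}$ remain symmetric and pairwise commuting on each proper invariant subspace, and that the recursion is genuinely decreasing (which it is, because a non-scalar $A_1$ has at least two distinct eigenvalues, forcing $\dim V_\lambda<n$). The base case of a single non-scalar symmetric matrix is handled by the spectral theorem, and the scalar case closes the recursion. Once eigenspace invariance under commuting operators is established, the remainder is bookkeeping.
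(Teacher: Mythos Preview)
Your proof is correct and follows the standard argument for simultaneous diagonalizability of commuting symmetric matrices. Note, however, that the paper does not supply its own proof of this proposition: it is quoted as a known result from \cite[Proposition 2.3.2]{hei2011} and used as a black box, so there is no in-paper argument to compare against.
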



If $G$ and $H$ are two commuting graphs, then by Proposition \ref{eigenvectors of commting matrices}, there exists an orthonormal basis $\{x_1,x_2,\ldots, x_n\}$ of $\mathbb R^n$ such that $x_i'$s are eigenvectors of both $A(G)$ and $A(H)$.   For such graphs, throughout this paper, the $A$-spectra of $G$ and $H$ are denoted by  $\lambda_1(G),\lambda_2(G),\ldots, \lambda_n(G)$ and $\lambda_1(H),\lambda_2(H),\ldots, \lambda_n(H)$, respectively, where $\lambda_i(G)$ and $\lambda_i(H)$ are the eigenvalues of $A(G)$ and $A(H)$ corresponding to the same eigenvector $x_i$, $i=1,2,\ldots,n$. Further, if $G$ and $H$ are  commuting graphs which are $r_1$, $r_2$ regular, respectively, then their $A$-spectra are denoted by $\lambda_1(G)(=r_1),\lambda_2(G),\ldots, \lambda_n(G)$ and $\lambda_1(H)(=r_2),\lambda_2(H),\ldots, \lambda_n(H)$, respectively; their $L$-spectra are denoted by $\mu_1(G)(=0),\mu_2(G),$ $\ldots, \mu_n(G)$ and  $\mu_1(H)(=0),\mu_2(H),\ldots, \mu_n(H)$, respectively. 

\begin{thm}\label{eigenvalues of M}
	Let $G$ be an $r-$regular graph ($r\geq2$) with $n$ vertices and $m~(=\frac{1}{2}nr)$ edges. Let $H$ be a  regular graph with $n$ which commutes with $G$. 
	Let $M$ be a matrix of the form 
	\[M=\begin{bmatrix}
	L(H)+rI_n & B(G) \\ B(G)^T & t_1I_m+t_2J_m+t_3B(G)^TB(G)
	\end{bmatrix}. 
	\]Then the eigenvalues of $M$ are
	\begin{eqnarray*}
		&t_1^{m-n}, \displaystyle\frac{1}{2}\left(r+t_1+2rt_3+mt_2\pm\sqrt{(r-t_1-2rt_3-mt_2)^2+8r} \right), \notag
		\\ &\displaystyle\frac{1}{2}\left( r+\mu_i(H)+t_1+2rt_3-t_3\mu_i(G)\pm\sqrt{\left( r+\mu_i(H)-t_1-2rt_3+t_3\mu_i(G)\right) ^2+4(2r-\mu_i(G))}\right).
	\end{eqnarray*}	
\end{thm}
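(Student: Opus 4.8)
The plan is to apply Proposition~\ref{determinant matrix including JBBT} directly, since $M$ already has the block form required there with $A=L(H)+rI_n$ and $B=B(G)$. First I would verify the row-sum hypothesis: each row of the incidence matrix $B(G)$ sums to the degree of the corresponding vertex, which is $r$ since $G$ is $r$-regular, so the hypothesis holds with constant $r$. Proposition~\ref{determinant matrix including JBBT} then expresses the characteristic polynomial of $M$ as $(x-t_1)^{m-n}$ times an $n\times n$ determinant, and the factor $(x-t_1)^{m-n}$ already accounts for the eigenvalue $t_1$ with multiplicity $m-n$.

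It remains to evaluate the $n\times n$ determinant $\left|\{(x-t_1)I_n-t_3B(G)B(G)^T-\tfrac{t_2}{2}rJ_n\}(xI_n-L(H)-rI_n)-B(G)B(G)^T\right|$. The key identity is $B(G)B(G)^T=\mathcal Q(G)=rI_n+A(G)$. Next I would invoke commutativity: since $G$ and $H$ commute, Proposition~\ref{eigenvectors of commting matrices} supplies an orthonormal basis $\{x_1,\ldots,x_n\}$ of common eigenvectors of $A(G)$ and $A(H)$, hence of $B(G)B(G)^T$ and $L(H)$. Because $G$ and $H$ are both regular, the all-ones vector $\mathbf 1$ is a common eigenvector, so I would single out $x_1=\mathbf 1/\sqrt n$, for which $\lambda_1(G)=r$ and $\mu_1(G)=0=\mu_1(H)$. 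Since $J_n$ acts as $n$ on $x_1$ and annihilates every $x_i$ with $i\geq 2$, all the blocks are simultaneously diagonalized by this basis, and the $n\times n$ determinant factors into the product of the scalar values the bracketed matrix takes on each $x_i$.

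On $x_1$ the scalar is $\{(x-t_1)-2rt_3-\tfrac12 rn\,t_2\}(x-r)-2r$, using $B(G)B(G)^Tx_1=2rx_1$, $L(H)x_1=0$, and $J_nx_1=nx_1$; rewriting $\tfrac12 rn=m$ and setting this quadratic to zero, the discriminant collapses via $(r+a)^2-4(ra-2r)=(r-a)^2+8r$ with $a=t_1+2rt_3+mt_2$, yielding the two eigenvalues $\tfrac12\bigl(r+t_1+2rt_3+mt_2\pm\sqrt{(r-t_1-2rt_3-mt_2)^2+8r}\bigr)$. On each $x_i$ with $i\geq 2$ I would substitute $B(G)B(G)^Tx_i=(r+\lambda_i(G))x_i=(2r-\mu_i(G))x_i$ (using $\lambda_i(G)=r-\mu_i(G)$ for the $r$-regular graph $G$), $L(H)x_i=\mu_i(H)x_i$, and $J_nx_i=0$. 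The scalar $\{(x-t_1)-t_3(2r-\mu_i(G))\}(x-\mu_i(H)-r)-(2r-\mu_i(G))$, set to zero, is again a quadratic whose discriminant simplifies by completing the square to give the last family of $2(n-1)$ eigenvalues in the statement. Collecting the $m-n$ copies of $t_1$, the two roots from $x_1$, and the $2(n-1)$ roots from $x_2,\ldots,x_n$ exhausts all $m+n$ eigenvalues of $M$.

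The main obstacle is conceptual rather than computational: one must guarantee that $J_n$ is simultaneously diagonalizable with the other blocks and that $\mathbf 1$ can be isolated as $x_1$. This is precisely where the regularity of \emph{both} $G$ and $H$ enters, and it is what forces $x_1$ to be handled separately (producing the second family of eigenvalues) from the remaining eigenvectors (producing the third family). Once simultaneous diagonalization is established, the remaining work is the routine quadratic-formula algebra together with the two discriminant simplifications noted above.
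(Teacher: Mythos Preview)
Your proposal is correct and follows essentially the same route as the paper's proof: apply Proposition~\ref{determinant matrix including JBBT} with $A=L(H)+rI_n$, $B=B(G)$ (row sums $r$), rewrite $B(G)B(G)^T=2rI_n-L(G)$, simultaneously diagonalize $J_n$, $L(G)$, $L(H)$ via Proposition~\ref{eigenvectors of commting matrices}, single out $x_1=\mathbf 1/\sqrt n$, and read off the quadratics. The only cosmetic difference is that the paper justifies the inclusion of $J_n$ in the commuting family by observing $J_nL(G')=L(G')J_n=0$ for every graph $G'$, whereas you argue directly that $\mathbf 1$ is a common eigenvector and hence may be taken as $x_1$; both arguments are valid and lead to the same computation.
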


\begin{proof}
	
	By using Proposition~\ref{determinant matrix including JBBT}, $|xI_{n+m}-M|$ equals
	\begin{eqnarray}
		&(x-t_1)^{m-n}\left|\left\{(x-t_1)I_n-t_3B(G)B(G)^T-\frac{t_2}{2}J_n\right\}\left\{(x-r)I_n-L(H)\right\}-B(G)B(G)^T\right| \nonumber
		\\=&(x-t_1)^{m-n}\left|\left\{(x-t_1)I_n-t_3(2rI_n-L(G))-\frac{t_2}{2}J_n\right\}\left\{(x-r)I_n-L(H)\right\}-2rI_n+L(G)\right|.\notag \\ \label{HHeq2}
	\end{eqnarray}
	Let $\mathcal D=\left|\left\{(x-t_1)I_n-t_3(2rI_n-L(G))-\displaystyle\frac{t_2}{2}J_n\right\}\left\{(x-r)I_n-L(H)\right\}-2rI_n+L(G)\right|$.
	
	For any graph $G'$, the sum of entries in each row (column) of $L(G')$ is 0. So, we have $J_nL(G')=0=L(G')J_n$. That is, for any graph $G'$, $J_n$ commutes with $L(G')$ and so $J_n$ commutes with both $L(G)$ and $L(H)$. Since $G$ and $H$ are regular commuting graphs, $L(G)$ and $L(H)$ also commute. So $J_n$, $L(G)$ and $L(H)$ mutually commute with each other. 
	Then by Proposition \ref{eigenvectors of commting matrices}, there exist orthonormal vectors $x_1,x_2,\ldots,x_n$ which are eigenvectors of $J_n$, $L(G)$ and $L(H)$. 		
	Sine 0 is an eigenvalue of both $L(G)$ and $L(H)$, respectively with eigenvector $\frac{1}{\sqrt{n}}J_{n\times 1}$, we assume that $\mu_1(G)=0$ and $\mu_1(H)=0$. 		
	Since $\frac{1}{\sqrt{n}}J_{n\times 1}$ is an eigenvector of $J_n$ corresponding to the eigenvalue $n$, we take $\lambda_1(J_n)=n$ and all other eigenvalues of $J_n$ are $0$.  
	Let $P$ be the matrix with columns $\frac{1}{\sqrt{n}}J_{n\times 1},x_2,\ldots,x_n$. Then $P$ is orthonormal. Consequently, $P^TL(G)P=diag(0,\mu_2(G),\ldots,\mu_n(G))$, $P^TL(H)P=diag(0,\mu_2(H),\ldots,\mu_n(H))$ and $P^TJ_nP=diag(n,0,0,\ldots,0)$. So, we have
	\begin{eqnarray*}
		\mathcal D&=&|P^T|\left|\left\{(x-t_1)I_n-t_3(2rI_n-L(G))-\frac{t_2}{2}J_n\right\}\left\{(x-r)I_n-L(H)\right\}-2rI_n+L(G)\right||P|
		\\&=&\left|\left\{(x-t_1)I_n-t_3(2rI_n-P^TL(G)P)-\frac{t_2}{2}P^TJ_nP\right\}\left\{(x-r)I_n-P^TL(H)P\right\}-2rI_n+L(G)\right|
		\\&=&\left\{x^2-\left(r+t_1+2rt_3+mt_2\right)x+r(t_1+2rt_3+mt_2)-2r\right\}
		\\&&\times\left\{\prod_{i=2}^{n}\left(x^2-\left[r+\mu_i(H)+t_1+t_3(2r-\mu_i(G))\right]x+t_3(2r-\mu_i(G))(r-\mu_i(H))-2r+\mu_i(G)\right) \right\}.
	\end{eqnarray*}
	Substituting this in \eqref{HHeq2}  we get the result.
\end{proof}
In the following result we deduce the $L$-spectra of some special cases of the graph $[S(G)]^{H_1}_{H_2}$.
\begin{cor}\label{lspectrum of hmerged subdivision}
	Let $G$ be an $r-$regular graph ($r\geq2$) with $n$ vertices and $m~(=\frac{1}{2}nr)$ edges. Let $H$ be a  regular graph with $n$ which commutes with $G$.  Then we have the following.
	\begin{enumerate}[(1)]
		%
		\item The $L$-spectrum of $[S(G)]^{H}$ is
		\[0,r+2, 2^{m-n}, \frac{1}{2}\left( r+\mu_i(H)+2\pm\sqrt{(r+\mu_i(H)+2)^2-8\mu_i(H)-4\mu_i(G)}\right) \text{ for } i=2,3,\ldots,n.
		\]
		
		
		\item  The $L$-spectrum of $[S(G)]_{K_m}^{H}$ is	
		\[0, r+2, (m+2)^{m-n}, \frac{1}{2}\left( m+r+\mu_i(H)+2\pm\sqrt{(m-r-\mu_i(H)+2)^2+4[2r-\mu_i(G)]}\right) \] for $i=2,3,\ldots,n$.

		\item (\cite{somody2017}) The $L$-spectrum of $[S(G)]_{\mathcal L(G)}^{H}$ is	
		\[0, r+2, (2r+2)^{m-n}, \frac{1}{2}\left( r+\mu_i(H)+\mu_i(G)+2\pm\sqrt{(\mu_i(G)-r-\mu_i(H)+2)^2+4[2r-\mu_i(G)]}\right) ,
		\] where $t_i=m-\mu_i(G)+2$ and $i=2,3,\ldots,n.$
		
		\item The $L$-spectrum of $[S(G)]_{\overline{\mathcal L(G)}}^{H}$ is	
		\[0, r+2, (m-2r+2)^{m-n}, \frac{1}{2}\left( t_i+r+\mu_i(H)\pm\sqrt{(t_i-r-\mu_i(H))^2+4[2r-\mu_i(G)]}\right) ,
		\] where $t_i=m-\mu_i(G)+2$ and $i=2,3,\ldots,n.$
	\end{enumerate} 
\end{cor}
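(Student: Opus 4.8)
The plan is to obtain all four spectra as direct specializations of Theorem~\ref{eigenvalues of M}. Since $G$ is $r$-regular, $D(G)=rI_n$, so by \eqref{lmatrix of h1h2merged subdivision} the Laplacian of $[S(G)]^{H}_{H_2}$ is
\[
L^\ast=\begin{bmatrix} L(H)+rI_n & -B(G)\\ -B(G)^T & L(H_2)+2I_m\end{bmatrix}.
\]
This agrees with the matrix $M$ of Theorem~\ref{eigenvalues of M} except in the sign of the two off-diagonal blocks, and conjugation by $\mathrm{diag}(I_n,-I_m)$ flips precisely those signs; hence $L^\ast$ and $M$ are similar and therefore share the same spectrum, as soon as the block $L(H_2)+2I_m$ is written in the prescribed shape. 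Thus the entire argument reduces to one task: for each $H_2$, exhibit scalars $t_1,t_2,t_3$ with $L(H_2)+2I_m=t_1I_m+t_2J_m+t_3\,B(G)^TB(G)$, and then quote Theorem~\ref{eigenvalues of M}.

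Carrying this out needs only a few standard identities. For $H_2=\overline{K_m}$ we have $L(H_2)=0$, so $(t_1,t_2,t_3)=(2,0,0)$, giving part~(1). For $H_2=K_m$, using $L(K_m)=mI_m-J_m$ gives $(t_1,t_2,t_3)=(m+2,-1,0)$, giving part~(2). For the line-graph cases I would first record $B(G)^TB(G)=A(\mathcal L(G))+2I_m$ and note that an $r$-regular $G$ has a $(2r-2)$-regular line graph, whence $L(\mathcal L(G))=2rI_m-B(G)^TB(G)$; this yields $(t_1,t_2,t_3)=(2r+2,0,-1)$ for $H_2=\mathcal L(G)$ (part~(3)). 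Finally, applying the complementation rule $L(\overline F)=mI_m-J_m-L(F)$ to $F=\mathcal L(G)$ gives $L(\overline{\mathcal L(G)})=(m-2r)I_m-J_m+B(G)^TB(G)$, hence $(t_1,t_2,t_3)=(m-2r+2,-1,1)$ for part~(4).

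It then remains to substitute each triple into the eigenvalue list of Theorem~\ref{eigenvalues of M}. The factor $t_1^{m-n}$ immediately produces the stated multiplicities $2^{m-n}$, $(m+2)^{m-n}$, $(2r+2)^{m-n}$, $(m-2r+2)^{m-n}$. The eigenvalue pair independent of the index $i$ collapses to $\{0,\,r+2\}$ in every case: the quantity $t_1+mt_2+2rt_3$ is exactly the common row sum of $L(H_2)+2I_m$, which equals $2$ because $L(H_2)$ is a Laplacian, and substituting this value turns the radicand $(r-t_1-2rt_3-mt_2)^2+8r$ into $(r+2)^2$, forcing the two roots to be $0$ and $r+2$. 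The step I expect to be most delicate is matching the discriminants of the remaining $2(n-1)$ eigenvalues to the published forms; for parts~(2)--(4) this is immediate, since the two expressions differ only by replacing the bracket inside the square by its negative (and, in part~(4), by abbreviating $m-\mu_i(G)+2$ as $t_i$), whereas part~(1) requires genuinely completing the square to rewrite $(r+\mu_i(H)-2)^2+4(2r-\mu_i(G))$ as $(r+\mu_i(H)+2)^2-8\mu_i(H)-4\mu_i(G)$. These are routine algebraic rearrangements, so no further idea is needed.
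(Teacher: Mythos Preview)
Your proposal is correct and follows essentially the same approach as the paper: identify the Laplacian block $L(H_2)+2I_m$ as $t_1I_m+t_2J_m+t_3B(G)^TB(G)$ for each of the four choices of $H_2$, and then invoke Theorem~\ref{eigenvalues of M} with exactly the parameter triples $(2,0,0)$, $(m+2,-1,0)$, $(2r+2,0,-1)$, $(m-2r+2,-1,1)$, which are the values the paper states. Your write-up is in fact more careful than the paper's, since you explicitly handle the sign discrepancy between the off-diagonal blocks $-B(G)$ of the Laplacian and $+B(G)$ of $M$ via the similarity $\mathrm{diag}(I_n,-I_m)$, and you spell out why the index-free pair always collapses to $\{0,r+2\}$ (through the observation $t_1+mt_2+2rt_3=2$) and why the discriminants match.
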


\begin{proof}
	
	The Laplacian matrix of $[S(G)]^{H}$, $[S(G)]_{K_m}^{H}$, $[S(G)]_{\mathcal L(G)}^{H}$ and $[S(G)]_{\overline{\mathcal L(G)}}^{H}$ can be obtained by substituting  $L(\overline K_m)=0$, $L(K_m)=mI_m-J_m$, $L(\mathcal L(G))=2rI_m-B(G)^TB(G)$ and $L(\overline {\mathcal L(G)})=(m-2r)I_m-J_m+B(G)^TB(G)$, respectively in \eqref{lmatrix of h1h2merged subdivision}. The $L$-spectrum of these matrices can be obtained, respectively by taking $t_1, t_2$ and $t_3$ in Theorem~\ref{eigenvalues of M} in the following order: (1) $t_1=2$, $t_2=t_3=0$; (2) $t_1=m+2$, $t_2=-1$ and $t_3=0$; (3) $t_1=2r+2$, $t_2=0$ and $t_3=-1$; (4) $t_1=m-2r+2$, $t_2=-1$, and $t_3=1$.
	%
	%
	%
	%
	%
\end{proof}


\begin{note}
	\normalfont
	For a given regular graph $G$, by suitably substituting the graph $H_1$ and $H_2$ in Corollary~\ref{lspectrum of hmerged subdivision}, we can obtain the $L$-spectra of its $R-$graph, central graph and each of the graphs defined in Definitions~\ref{unary combined}(1)-(9). Also, it can be seen that the $L$-spectra of these graphs constructed using $G$ are uniquely determined by the $L$-spectrum of $G$. Consequently, if $G$ and $G'$ are two regular $L$-cospectral graphs, then the graphs constructed  using them as in  Definitions~\ref{unary combined}(1)-(9) are $L$-cospectral.
\end{note}

\noindent\textbf{$(H_1,H_2)$-merged subdivision graph of $K_{p,p}$}

In the next result, we deduce the $L$-spectrum of $(H_1,H_2)$-merged subdivision graph of $K_{p,p}$, for $H_2=\overline K_m,K_m$, $\overline {\mathcal L(K_{p,p})}$. 

To obtain this, we use the following result:

\begin{pro}(\cite[Proof of Lemma 3.13]{bapat2010}) \label{spectrum of bipartite graphs}
	Let $G$ be a bipartite graph with bipartite sets $X$ and $Y$ having $p$ and $q$ vertices, respectively. If $\lambda(G)$ is an eigenvalue of $G$ with eigenvector $[x_1~x_2]^T$, then $-\lambda(G)$ is also an eigenvalue of $G$ with eigenvector $[x_1~-x_2]^T$, where $x_1\in \mathbb R^p$ and $x_2\in \mathbb R^q$.
\end{pro}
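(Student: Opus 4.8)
The plan is to exploit the block structure that bipartiteness forces on the adjacency matrix. Ordering the vertices so that those in $X$ come first and those in $Y$ come second, the absence of edges within $X$ and within $Y$ means that $A(G)$ has the form
\[
A(G)=\begin{bmatrix} 0 & C\\ C^T & 0\end{bmatrix},
\]
where $C$ is the $p\times q$ biadjacency matrix recording the edges between $X$ and $Y$, and the two diagonal blocks are the $p\times p$ and $q\times q$ zero matrices. This single observation is really the whole content of the result; everything that follows is a short verification.

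Next I would unpack the hypothesis. Writing the given eigenvector as $[x_1~x_2]^T$ with $x_1\in\mathbb R^p$ and $x_2\in\mathbb R^q$, the eigenvalue equation $A(G)[x_1~x_2]^T=\lambda(G)[x_1~x_2]^T$ splits, block by block, into the coupled pair
\[
Cx_2=\lambda(G)x_1,\qquad C^Tx_1=\lambda(G)x_2.
\]
These two relations are exactly what the sign flip will interchange.

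Finally I would plug in the candidate vector $[x_1~-x_2]^T$ and compute directly. Using the off-diagonal block form,
\[
A(G)\begin{bmatrix}x_1\\-x_2\end{bmatrix}=\begin{bmatrix}-Cx_2\\ C^Tx_1\end{bmatrix}=\begin{bmatrix}-\lambda(G)x_1\\ \lambda(G)x_2\end{bmatrix}=-\lambda(G)\begin{bmatrix}x_1\\-x_2\end{bmatrix},
\]
where the middle equality invokes the two relations just derived. Hence $-\lambda(G)$ is an eigenvalue of $A(G)$ with eigenvector $[x_1~-x_2]^T$. I would also remark that $[x_1~-x_2]^T$ is nonzero precisely when $[x_1~x_2]^T$ is, so it is a genuine eigenvector and the claim is complete.

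There is no real obstacle here: the statement is a classical symmetry of bipartite spectra, and once the adjacency matrix is written in the off-diagonal block form the verification reduces to a one-line matrix multiplication. The only point deserving mild care is the vertex ordering and the bookkeeping ensuring the diagonal blocks vanish, which is exactly what the bipartiteness assumption supplies.
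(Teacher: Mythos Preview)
Your argument is correct and is exactly the standard proof of this classical symmetry of bipartite spectra. The paper does not supply its own proof here; it simply cites \cite[Proof of Lemma 3.13]{bapat2010}, and the argument there is precisely the one you have written: put $A(G)$ in off-diagonal block form and verify by direct multiplication that flipping the sign of the $Y$-coordinates converts an eigenvector for $\lambda(G)$ into one for $-\lambda(G)$.
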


\begin{cor}\label{spectra of hhmerged subdivision of Kpp}
	Let $H$ be a spanning $r-$regular subgraph of $K_{p,p}$. Then we have the following.
	\begin{enumerate}[(1)]
		
		\item The $L$-spectrum of $[S(K_{p,p})]^{H}$ is
		\[0,p+2,p+2r, 2^{p^2-2p+1},
		\frac{1}{2}\left( p+\mu_i(H)+2\pm\sqrt{(p+\mu_i(H)+2)^2-8\mu_i(H)-4p}\right)\]
		  $for  i=3,4,\ldots,2p$.

		
		\item The $L$-spectrum of $[S(K_{p,p})]_{K_{p^2}}^{H}$ is	
		\[0, p+2, (p^2+2)^{p^2-2p+1}, p+2r,
		\frac{1}{2} \left( p^2+p+\mu_i(H)+2\pm\sqrt{(p^2-p-\mu_i(H)+2)^2+8p}\right) 	\]   for  $i=3,4,\ldots,2p$.

		
		\item The $L$-spectrum of $[S(K_{p,p})]_{\overline{\mathcal L(K_{p,p})}}^{H}$ is	
		\[0, p+2, (p^2-2p+2)^{p^2-2p+1}, p+2r,
		\frac{1}{2}\left( p^2+\mu_i(H)+2\pm\sqrt{(p^2-2p-\mu_i(H)+2)^2+4p}\right)\]   for  $i=3,4,\ldots,2p$.

		\item The $L$-spectrum of $[S(H)]^{K_{p,p}}$ is
		\[0,r+2, r+2p, 2^{m-2p+1}, 
		\frac{1}{2}\left( r+2+p\pm\sqrt{(r+2+p)^2-8p-4\mu_i(G)}\right)   \text { for } i=3,4,\ldots,2p. 
		\] 
		
		
		\item The $L$-spectrum of $[S(H)]_{K_{m}}^{K_{p,p}}$ is	
		\[0, r+2, r+2p, (m+2)^{m-2p+1},
		\frac{1}{2} \left( m+r+p+2\pm\sqrt{(m-r-p+2)^2+4[2r-\mu_i(G)]}\right) \]  for $i=3,4,\ldots,2p$.

		
		\item The $L$-spectrum of $[S(H)]_{\overline{\mathcal L(H)}}^{K_{p,p}}$ is	
		\[0, r+2, r+2p, (m-2r+2)^{m-2p+1}, \frac{1}{2}\left(s_i+k\pm\sqrt{(s_i-k)^2+4[2r-\mu_i(G)]} \right) ,
		\] where $k=r+p$, $s_i=m-\mu_i(G)+2$ and $i=3,4,\ldots,2p$,
	\end{enumerate}
\end{cor}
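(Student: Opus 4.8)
The plan is to obtain every one of the six spectra as a direct specialization of Theorem~\ref{eigenvalues of M}, exploiting the fact recalled in Section~\ref{sec1} that $K_{p,p}$ commutes with any of its regular spanning subgraphs; this is exactly what supplies the commuting hypothesis of Theorem~\ref{eigenvalues of M}, under two different role assignments. For parts (1)--(3) I would take the base graph to be $G=K_{p,p}$, which is $p$-regular on $n=2p$ vertices with $m=p^2$ edges, and the vertex-attached regular graph to be $H$; thus the ``$r$'' of Theorem~\ref{eigenvalues of M} becomes $p$. For parts (4)--(6) the roles are swapped: the base graph is $G=H$, which is $r$-regular on $2p$ vertices with $m=pr$ edges, while the commuting regular graph attached at the original vertices is $K_{p,p}$. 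In both assignments the bottom-right block $L(H_2)+2I_m$ of \eqref{lmatrix of h1h2merged subdivision} is written in the form $t_1I_m+t_2J_m+t_3B(G)^TB(G)$ exactly as in the proof of Corollary~\ref{lspectrum of hmerged subdivision}: $H_2=\overline K_m$ gives $(t_1,t_2,t_3)=(2,0,0)$ (parts (1),(4)); $H_2=K_m$ gives $(m+2,-1,0)$ (parts (2),(5)); and $H_2=\overline{\mathcal L(G)}$ gives $(m-2r+2,-1,1)$ (parts (3),(6)).

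The new ingredient, absent in Corollary~\ref{lspectrum of hmerged subdivision}, is the special shape of the Laplacian spectrum of $K_{p,p}$, namely $\{0,\,p^{2p-2},\,2p\}$. The eigenvalue $2p$ is simple, and by Proposition~\ref{spectrum of bipartite graphs} its eigenvector is the sign vector that takes the value $+1$ on one part and $-1$ on the other, arising from the $\pm$ symmetry of the bipartite adjacency spectrum. Since this vector spans a one-dimensional eigenspace, any matrix commuting with $A(K_{p,p})$ must fix it; concretely, the $r$-regular bipartite partner satisfies $A(H)x=-r\,x$ on this vector $x$, so $x$ is a common eigenvector on which $L(H)$ has eigenvalue $r-(-r)=2r$ and $L(K_{p,p})$ has eigenvalue $2p$. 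Consequently, in the common orthonormal eigenbasis used by Theorem~\ref{eigenvalues of M}, the index $i=1$ is the all-ones vector with $\mu_1=0$ for both graphs, exactly one further index is the sign vector with $(\mu_i(G),\mu_i(H))$ equal to $(2p,2r)$ in parts (1)--(3) and $(2r,2p)$ in parts (4)--(6), and the remaining $2p-2$ ``bulk'' indices all carry the Laplacian eigenvalue $p$ of $K_{p,p}$.

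With this trichotomy in hand, reading off the three families in Theorem~\ref{eigenvalues of M} yields the stated lists. The $\mu_1=0$ pair collapses (its discriminant is a perfect square) to the two eigenvalues $0$ and the regularity-shifted value $p+2$ (resp. $r+2$). At the sign-vector index the cross term $4(2r-\mu_i(G))$ vanishes, because there $\mu_i(G)$ equals twice the regularity of the base graph; its discriminant is therefore again a perfect square, and the two resulting roots are one isolated eigenvalue ($p+2r$ in parts (1)--(3), $r+2p$ in parts (4)--(6)) together with one extra copy of $t_1$. Adjoining this extra copy to the factor $(x-t_1)^{m-n}$ produced by Proposition~\ref{determinant matrix including JBBT} raises the multiplicity of $t_1$ from $m-n$ to $m-2p+1$, accounting for the exponents $p^2-2p+1$ and $m-2p+1$ in the statement. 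Finally the $2p-2$ bulk indices feed the value $p$ into the quadratic of Theorem~\ref{eigenvalues of M}, and expanding the square converts it into the displayed closed forms for $i=3,\dots,2p$.

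The main obstacle is the second step: one must verify that the sign vector is genuinely a common eigenvector (this is where Proposition~\ref{spectrum of bipartite graphs} and the simplicity of the eigenvalue $2p$ are essential) and that the cross term really vanishes there, since this perfect-square collapse is precisely what detaches the isolated eigenvalue $p+2r$ (resp. $r+2p$) from the bulk and what bumps the multiplicity of $t_1$ by one. The remainder is the routine but slightly tedious algebraic simplification of the two discriminants, together with the bookkeeping check that Theorem~\ref{eigenvalues of M} applies, which requires the base graph to be regular of degree at least $2$.
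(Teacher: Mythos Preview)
Your proposal is correct and follows essentially the same route as the paper. The paper's proof invokes Corollary~\ref{lspectrum of hmerged subdivision} (which is itself Theorem~\ref{eigenvalues of M} with the three $(t_1,t_2,t_3)$ specializations you list), after first arranging the common Laplacian eigenbasis so that $\mu_1(K_{p,p})=0$, $\mu_2(K_{p,p})=2p$ paired with $\mu_2(H)=2r$, via exactly the sign-vector argument from Proposition~\ref{spectrum of bipartite graphs} that you describe; your direct appeal to Theorem~\ref{eigenvalues of M} simply unfolds that corollary, and your explicit explanation of why the discriminant collapses at the sign-vector index (producing the isolated eigenvalue and the extra copy of $t_1$) spells out a step the paper leaves to the reader.
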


\begin{proof}
	Note that the spectrum of $K_{p,p}$ is $p,-p,0^{2p-2}$. Also $J_{2p\times 1}$ and $[J_{1\times p}~ -J_{1\times p}]^T$ are the eigenvectors corresponding to the eigenvalues $p$ and $-p$, respectively. Since $H$ is an $r-$regular spanning subgraph of $K_{p,p}$, it is a $r-$regular bipartite graph. Since $H$ is $r-$regular, $r$ is an eigenvalue of $A(H)$ corresponding to the eigenvector $J_{2p\times 1}$.  By Proposition \ref{spectrum of bipartite graphs}, $-r$ is also an eigenvalue of $H$ corresponding to the eigenvector $[J_{1\times p}~ -J_{1\times p}]^T$, since $H$ is bipartite. 
	Consequently, $0$ is an eigenvalue of $L(K_{p,p})$ (resp. $L(H)$) corresponding to the eigenvector $J_{2p\times 1}$ and $2p$ (resp. $2r$) is an eigenvalue of $L(K_{p,p})$ (resp. $L(H)$) corresponding to the eigenvector $[J_{1\times p}~ -J_{1\times p}]^T$. So, we arrange the $L$-spectrum of $K_{p,p}$ and $H$ such that $\mu_1({K_{p,p}})=0,\mu_2({K_{p,p}})=2p,\mu_3({K_{p,p}})=p,\ldots,\mu_{2p}({K_{p,p}})=p$ and $\mu_1(H)=0,\mu_2(H)=2r,\mu_3(H),\ldots,\mu_{2p}(H)$. Then by using Corollary \ref{lspectrum of hmerged subdivision}, we get the result.
\end{proof}

\begin{note}
	\normalfont
	The argument used in the proof of Corollary~\ref{lspectrum of hmerged subdivision} can be used to derive the adjacency, signless Laplacian spectrum and the normalized Laplacian spectrum of those graphs. 
\end{note}

\noindent\textbf{$(H_1,H_2)$-merged subdivision graph of $K_{1,m}$}
\begin{thm}\label{lchpolygensubstar}
	If $H$ is a graph with $m$ vertices, then we have the following.
	\begin{enumerate}[(1)]
		\item If $H$ is $r-$regular, then the $A$-spectrum of  $[S(K_{1,m})]_{H}$ is \[ 0,\frac{1}{2}\left( r\pm\sqrt{r^2+4m+4}\right) ,\frac{1}{2}\left( \lambda_i(H)\pm\sqrt{\lambda_i(H)^2+4}\right)  \text{ for } i=2,3, \ldots,m.\]
		
		\item The $L$-spectrum of $[S(K_{1,m})]_{H}$ is $$0, \frac{1}{2}\left( m+3\pm\sqrt{(m-1)^2+4}\right) , \frac{1}{2}\left( \mu_i(H)+3\pm\sqrt{[\mu_i(H)+1]^2+4}\right) 
		\text{ for } i=2,3,\ldots,m, $$
	\end{enumerate}
\end{thm}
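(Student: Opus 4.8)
The plan is to write the adjacency and Laplacian matrices of $[S(K_{1,m})]_H$ in the block form induced by the partition of its $2m+1$ vertices into the original vertices of $K_{1,m}$ (the center together with the $m$ leaves) and the $m$ subdivision vertices, and then to peel off the star structure by a Schur complement. The key structural fact I would record first is that, listing the center before the leaves, the incidence matrix of the star is $B(K_{1,m})=\begin{bmatrix} J_{1\times m}\\ I_m\end{bmatrix}$, so that $B(K_{1,m})^T B(K_{1,m})=I_m+J_m$ and $D(K_{1,m})=\operatorname{diag}(m,I_m)$. Since $H_1=\overline K_{m+1}$ we have $A(H_1)=0$ and $L(H_1)=0$, making the top-left block of each matrix very clean.

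For part (1), the adjacency matrix is $\begin{bmatrix} 0 & B(K_{1,m})\\ B(K_{1,m})^T & A(H)\end{bmatrix}$, so the top-left block of $xI_{2m+1}-A$ is simply $xI_{m+1}$. Applying Theorem~\ref{schur} with this (invertible for $x\neq0$, the identity then extending to a polynomial identity) gives
\[
\det(xI-A)=x^{m+1}\det\!\left(\Bigl(x-\tfrac{1}{x}\Bigr)I_m-A(H)-\tfrac{1}{x}J_m\right),
\]
after substituting $B^TB=I_m+J_m$. Because $H$ is $r$-regular, $A(H)J_m=rJ_m=J_mA(H)$, so $A(H)$ and $J_m$ commute and can be simultaneously diagonalized; the all-ones direction carries the pair $(\lambda_1(H),m)=(r,m)$ while each remaining common eigenvector carries $(\lambda_i(H),0)$. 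The determinant factors into scalar terms, and clearing denominators turns the $i=1$ term into $x^2-rx-(m+1)$ and each $i\geq2$ term into $x^2-\lambda_i(H)x-1$. Collecting the leftover factor of $x$ yields the claimed roots $0$, $\tfrac12\!\left(r\pm\sqrt{r^2+4m+4}\right)$ and $\tfrac12\!\left(\lambda_i(H)\pm\sqrt{\lambda_i(H)^2+4}\right)$.

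For part (2), I would start from the Laplacian block form $\begin{bmatrix} D(K_{1,m}) & -B(K_{1,m})\\ -B(K_{1,m})^T & L(H)+2I_m\end{bmatrix}$. The crucial observation — and the reason no regularity hypothesis is needed here — is that $J_m$ commutes with $L(H)$ for \emph{every} graph $H$, since $L(H)$ has zero row and column sums, so $L(H)J_m=0=J_mL(H)$. Taking the Schur complement with respect to the diagonal block $xI_{m+1}-D(K_{1,m})=\operatorname{diag}(x-m,(x-1)I_m)$ produces
\[
\det(xI-L)=(x-m)(x-1)^m\det\!\left(\Bigl(x-2-\tfrac{1}{x-1}\Bigr)I_m-L(H)-\tfrac{1}{x-m}J_m\right),
\]
using $B(xI_{m+1}-D)^{-1}B^T=\tfrac{1}{x-m}J_m+\tfrac{1}{x-1}I_m$. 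Simultaneously diagonalizing $L(H)$ and $J_m$ splits this into scalar factors: the all-ones direction (eigenvalues $\mu_1(H)=0$ and $m$) combines with the prefactor $(x-m)(x-1)$ to give $x\bigl(x^2-(m+3)x+(2m+1)\bigr)$, while each $i\geq2$ direction contributes $\tfrac{1}{x-1}\bigl(x^2-(\mu_i(H)+3)x+(\mu_i(H)+1)\bigr)$, whose spurious $(x-1)$ is absorbed by the remaining $(x-1)^{m-1}$. Solving the quadratics gives the stated eigenvalues, the discriminants $(m-1)^2+4$ and $(\mu_i(H)+1)^2+4$ emerging after simplification.

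The only genuinely delicate point — and the step I expect to be the main bookkeeping obstacle — is the passage through the rational expressions created by the Schur complements: the terms $\tfrac1x$ (part 1) and $\tfrac{1}{x-1},\tfrac{1}{x-m}$ (part 2) are licensed only away from their poles, so I would phrase each computation as an identity of polynomials established on the dense set $\{x\neq0\}$ (resp.\ $\{x\neq1,m\}$) and extended by continuity, while tracking the powers of $(x-1)$ carefully to confirm the exact cancellation that leaves a true polynomial of the correct degree $2m+1$.
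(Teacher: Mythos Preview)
Your proposal is correct and follows essentially the same route as the paper: in both parts you take the Schur complement with respect to the top-left block of $xI-A$ (resp.\ $xI-L$), reduce to an $m\times m$ determinant involving $A(H)$ (resp.\ $L(H)$) and $J_m$, and then simultaneously diagonalize using the commutativity of $J_m$ with $A(H)$ in the regular case and with $L(H)$ always. The paper phrases part~(1) via the identity $B(K_{1,m})^TB(K_{1,m})=A(\mathcal L(K_{1,m}))+2I_m=A(K_m)+2I_m$ rather than writing $I_m+J_m$ directly, but this is the same computation, and your explicit remark on extending the rational Schur-complement identity to a polynomial identity by density is a welcome clarification that the paper leaves implicit.
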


\begin{proof}
	\begin{enumerate}[(1)]
		\item It is easy to see that
		\begin{equation*}
			A([S(G)]_H)=\begin{bmatrix}
				0&B(K_{1,m})\\B(K_{1,m})^T&A(H)
			\end{bmatrix}.
		\end{equation*}
		By using Theorem~\ref{schur} and the fact $\mathcal L(K_{1,m})=K_m$, we have 
		\begin{eqnarray*}
			P_{[S(K_{1,m})]_{H}}(x)&=&x^{n-m}\left|x^2I_m-xA(H)-B(K_{1,m})^TB(K_{1,m})\right|
			\\&=&x^{n-m}\left|(x^2-2)I_m-xA(H)-A(\mathcal L(K_{1,m}))\right|
			\\&=&x^{n-m}\left|(x^2-2)I_m-xA(H)-A(K_m)\right|
			\\&=&x^{n-m}\left|(x^2-1)I_m-xA(H)-J_m\right|
			\\&=&x(x^2-rx-m-1)\times\left\{\prod_{i=2}^{m}(x^2-\lambda_i(H)x-1)\right\}.
		\end{eqnarray*}
		So the proof follows.
		
		\item 	It is easy to see that
		$$L([S(K_{1,m})]_H)=\begin{bmatrix}
		D(K_{1,m})&-B(K_{1,m})\\-B(K_{1,m})^T&L(H)+2I_m
		\end{bmatrix}.$$
		By using Theorem~\ref{schur}, we have 
		\begin{eqnarray}\label{detlk1m}
			L_{[S(K_{1,m})]_{H}}(x)&=&\left|xI_{m+1}-D(K_{1,m})\right|\times\notag\\ &&\left|xI_m-L(H)-2I_m-B(K_{1,m})\left[xI_{m+1}-D(K_{1,m})\right]^{-1}B(K_{1,m})^T\right|
		\end{eqnarray}
		Since $B(K_{1,m})^T=\begin{bmatrix}
		J_{m\times 1}&I_m
		\end{bmatrix}$ and $D(K_{1,m})=\begin{bmatrix}
		m&0\\0&I_m
		\end{bmatrix}$, we have
		$$\left|xI_{m+1}-D(K_{1,m})\right|=(x-m)(x-1)^m$$ and 
		$$B(K_{1,m})^T(xI-D(K_{1,m}))^{-1}B(K_{1,m})=\frac{1}{x-m}J_m+\frac{1}{x-1}I_m.$$
		Applying these in \eqref{detlk1m}, we get \begin{eqnarray*}
			L_{[S(K_{1,m})]_{H}}(x)&=&(x-m)(x-1)^m~\left|(x-2)I_m-L(H)-\frac{1}{x-m}J_m-\frac{1}{x-1}I_m\right|
			\\&=&(x-m)^{1-m}\left|(x-m)(x-1)\left[(x-2)I_m-L(H)\right]-(x-1)J_m-(x-m)I_m\right|
		\end{eqnarray*}
		Using the eigenvalues of $J_m$,
		\begin{eqnarray*}
			L_{[S(K_{1,m})]_{H}}(x)&=&
			\left\{(x-m)(x-1)(x-2)-m(x-1)-(x-m)\right\}\\
			&&
			\times\left\{\prod_{i=2}^{n}\left((x-1)[x-\mu_i(H)-2]-1\right)\right\}
			\\&=&	x(x^2-(m+3)x+2m+1)\times\left\{\prod_{i=2}^{m}\left (x^2-(\mu_i(H)+3)x+\mu_i(H)+1 \right )\right\}.
		\end{eqnarray*}
		So the proof follows.
	\end{enumerate}
\end{proof}

\noindent\textbf{$(H_1,H_2)$-merged subdivision graph of $P_n$}


The following result is proved in \cite{beezer1984}.
\begin{thm}(\cite[Theorem 3.2]{beezer1984}) \label{pathpolynomial graph}
	Suppose that $p(x)$ is a polynomial of degree less than $n$. Then $p(A(P_n))$ is the adjacency matrix of a graph if and only if $p(x) = P_{P_{2i+1}}(x)$, for some
	$i$, $0\leq i \leq \lfloor{\frac{n}{2}}\rfloor-1$.
\end{thm}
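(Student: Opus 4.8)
The plan is to reduce both directions to a single closed form for the entries of the matrices $P_{P_k}(A(P_n))$, and then read off everything from it. Write $A=A(P_n)$, let $e_1,\dots,e_n$ be the standard basis with the convention $e_0=e_{n+1}=0$, so that $Ae_i=e_{i-1}+e_{i+1}$. Because $A$ has $n$ distinct eigenvalues $2\cos\frac{j\pi}{n+1}$, its minimal polynomial has degree $n$; combined with the recurrence $P_{P_k}(x)=xP_{P_{k-1}}(x)-P_{P_{k-2}}(x)$ (with $P_{P_0}=1$, $P_{P_1}=x$), this shows that $\{P_{P_0},\dots,P_{P_{n-1}}\}$ is a basis of the polynomials of degree below $n$. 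Hence each admissible $p$ has a unique expansion $p=\sum_{k=0}^{n-1}c_kP_{P_k}$ with $p(A)=\sum_k c_kP_{P_k}(A)$. A short induction on the recurrence gives $P_{P_k}(A)e_1=e_{k+1}$ for $0\le k\le n-1$, so the first column of $p(A)$ is precisely $(c_0,c_1,\dots,c_{n-1})^{T}$; this single observation will drive the converse.

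Next I would compute every entry of $P_{P_k}(A)$. On the bi-infinite line the recurrence gives $P_{P_k}(A)e_j=\sum_{t=0}^{k}e_{j-k+2t}$, and the path is recovered by the method of images, enforcing $e_0=e_{n+1}=0$ through the odd reflections $e_{-s}=-e_s$ and $e_{2(n+1)-s}=-e_s$. The decisive point is that for $k\le n-1$ every index $j-k+2t$ lies in $[2-n,2n-1]$ and so needs at most one reflection; folding the three possible preimages $s=i$, $s=-i$, $s=2(n+1)-i$ of a target position $i$ then yields, whenever $i\equiv j+k\pmod 2$ (the entry being $0$ otherwise),
\[
[P_{P_k}(A)]_{ij}=\bigl[\,|i-j|\le k\,\bigr]-\bigl[\,i+j\le k\,\bigr]-\bigl[\,i+j\ge 2(n+1)-k\,\bigr],
\]
with $[\,\cdot\,]$ the Iverson bracket. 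Establishing this folding identity, and in particular that $\deg p<n$ is exactly what prevents double reflections, is the step I expect to be the main obstacle; everything else is bookkeeping on top of it.

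The forward implication is then a short verification. For odd $k$ the congruence $i\equiv j+k$ forbids $i=j$, so the diagonal vanishes. For off-diagonal entries, $i+j\le k$ forces $|i-j|\le i+j-2\le k-2<k$, and symmetrically $i+j\ge 2(n+1)-k$ (apply the reflection $i\mapsto n+1-i$) forces $|i-j|\le k$; moreover the two subtracted brackets cannot both equal $1$, since that would require $k\ge n+1$. Thus when a subtraction fires the first bracket equals $1$ and the entry is $0$, and otherwise the entry is $[\,|i-j|\le k\,]\in\{0,1\}$. Hence $P_{P_k}(A(P_n))$ is a symmetric $0$--$1$ matrix with zero diagonal for every odd $k$ with $1\le k\le n-1$, and these $k$ are exactly the indices $2i+1$, $0\le i\le\lfloor n/2\rfloor-1$.

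For the converse, if $p(A)$ is an adjacency matrix then the first-column computation forces $c_0=0$ and $c_k\in\{0,1\}$, so $p=\sum_{k\in T}P_{P_k}$ for some $T\subseteq\{1,\dots,n-1\}$. Picking a central vertex $j_0$ with $\min(j_0,n+1-j_0)>(n-1)/2$ makes both subtracted brackets vanish for every $k\le n-1$, so the diagonal entry there equals $\#\{k\in T:k\text{ even}\}$; as it must be $0$, $T$ has no even element. Finally, were there two odd indices $k_1<k_2$ in $T$, I would take $i,j$ of opposite parity with $i-j=k_1$ and $i+j$ placed strictly between $k_2$ and $2(n+1)-k_2$ (the bound $k_2\le n-1$ leaves room), so that both subtractions again vanish and the entry equals $\#\{k\in T:k\ge|i-j|\}=|T|\ge2$, contradicting the $0$--$1$ property. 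Hence $T=\{k\}$ for a single odd $k$, i.e. $p=P_{P_k}=P_{P_{2i+1}}$. The empty expansion $T=\emptyset$ yields the edgeless graph and is excluded under the convention that $p(A)$ be the adjacency matrix of a genuine graph.
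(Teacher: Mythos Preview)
The paper does not prove this theorem at all: it is quoted verbatim from Beezer~\cite{beezer1984} and used as a black box. There is therefore nothing in the present paper to compare your argument against.

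That said, your proof sketch is essentially sound and self-contained. The key ingredients---that $\{P_{P_0},\dots,P_{P_{n-1}}\}$ form a basis because $A(P_n)$ has simple spectrum, that $P_{P_k}(A)e_1=e_{k+1}$ reads off the expansion coefficients from the first column, and the method-of-images formula for $[P_{P_k}(A)]_{ij}$---are all correct, and your verification that only single reflections occur when $k\le n-1$ is exactly the reason the degree bound matters. The forward direction and the elimination of even indices and of multiple odd indices in the converse are carried out correctly; the existence of the required $(i,j)$ in the two-odd-indices step does hold since $k_2\le n-1$ guarantees the interval $(k_2,\,2(n+1)-k_2)$ has length at least $4$, leaving room for a value of $i+j$ of the right parity.

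The only loose end is the final sentence about $T=\emptyset$: the zero matrix \emph{is} the adjacency matrix of the edgeless graph, so if $p=0$ is admitted as a polynomial of degree less than $n$ the ``only if'' direction fails as stated. This is a defect of the theorem's formulation (present already in the cited source), not of your argument; you should simply note explicitly that the statement tacitly excludes the zero polynomial rather than invoke an unstated convention about ``genuine'' graphs.
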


Using this result, we prove the following result.

\begin{cor}
	Let $n \geq 3$ be an integer.	 If $H$ is a graph with $A(H)=P_{P_{2i+1}}(A(P_{n-1}))$, for some $i$, with $0\leq i \leq \lfloor{\frac{n-1}{2}}\rfloor-1$, then the $A$-spectrum of  $[S(P_n)]_H$ is
	\[0, \frac{c_j\pm\sqrt{c_j^2+8\left(\cos\frac{\pi j}{n}+1 \right)}}{2}, \] where $c_j=\displaystyle\sum_{k=0}^{i}(-1)^k\binom{2i+1-k}{k}\left(2\cos\frac{\pi j}{n} \right)^{2(i-k)+1} $ and $j=1,2,\ldots,n-1$.
\end{cor}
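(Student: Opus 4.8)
The plan is to proceed exactly as in the proof of Theorem~\ref{lchpolygensubstar}(1): write down the adjacency matrix of $[S(P_n)]_H$ in block form, reduce its characteristic polynomial to an $(n-1)\times(n-1)$ determinant via a Schur complement, and then use the fact that $A(H)$ is a polynomial in $A(P_{n-1})$ to diagonalize everything at once. Writing $m=n-1$ for the number of edges of $P_n$, I would first record
\[
A([S(P_n)]_H)=\begin{bmatrix} 0 & B(P_n)\\ B(P_n)^T & A(H)\end{bmatrix},
\]
so that by Theorem~\ref{schur} (taking the $xI_n$ block as $A_1$ and working with $x\neq0$),
\[
P_{[S(P_n)]_H}(x)=x^{\,n-m}\left|x^2 I_m - x A(H) - B(P_n)^T B(P_n)\right| = x\left|(x^2-2)I_m - x A(H) - A(P_{n-1})\right|,
\]
where I have used $B(P_n)^T B(P_n)=2I_m+A(\mathcal L(P_n))$ together with the identity $\mathcal L(P_n)\cong P_{n-1}$.

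The central step is the simultaneous diagonalization. Because $A(H)=P_{P_{2i+1}}(A(P_{n-1}))$ is a polynomial in $A(P_{n-1})$ (and Theorem~\ref{pathpolynomial graph} guarantees this is a genuine adjacency matrix, so $H$ is a well-defined graph), the matrices $A(H)$, $A(P_{n-1})$ and $I_m$ mutually commute and admit a common orthonormal eigenbasis by Proposition~\ref{eigenvectors of commting matrices}. The eigenvalues of $A(P_{n-1})$ are $\theta_j=2\cos\frac{\pi j}{n}$, $j=1,\dots,n-1$, and on the same eigenvector $A(H)$ acts as $P_{P_{2i+1}}(\theta_j)$. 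Hence the $(n-1)\times(n-1)$ determinant factors into quadratics,
\[
\left|(x^2-2)I_m - x A(H) - A(P_{n-1})\right|=\prod_{j=1}^{n-1}\Big( x^2 - P_{P_{2i+1}}(\theta_j)\,x - (2+\theta_j)\Big).
\]

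It then remains to identify $P_{P_{2i+1}}(\theta_j)$ with the stated quantity $c_j$ and to solve the quadratics. For the identification I would invoke the standard closed form for the characteristic polynomial of a path, $P_{P_N}(x)=\sum_{k=0}^{\lfloor N/2\rfloor}(-1)^k\binom{N-k}{k}x^{N-2k}$; specializing $N=2i+1$ and $x=\theta_j=2\cos\frac{\pi j}{n}$ yields exactly $c_j=\sum_{k=0}^{i}(-1)^k\binom{2i+1-k}{k}\big(2\cos\frac{\pi j}{n}\big)^{2(i-k)+1}$. Finally, writing $2+\theta_j=2\big(1+\cos\frac{\pi j}{n}\big)$, the quadratic $x^2-c_j x-(2+\theta_j)$ has roots $\tfrac12\big(c_j\pm\sqrt{c_j^2+8(\cos\frac{\pi j}{n}+1)}\big)$, which together with the prefactor $x$ (contributing the eigenvalue $0$) gives the claimed spectrum; a count confirms $1+2(n-1)=2n-1$ eigenvalues, matching $|V([S(P_n)]_H)|=n+m$.

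The main obstacle is the combinatorial identification of $c_j$ with $P_{P_{2i+1}}(\theta_j)$: one must verify that the polynomial $p$ in Theorem~\ref{pathpolynomial graph} is precisely $P_{P_{2i+1}}$ and that its closed-form coefficients reproduce the binomial expression in the statement. Everything else — the Schur reduction, the line-graph identity $\mathcal L(P_n)\cong P_{n-1}$, and the factorization of the determinant — is routine once the commutativity of $A(H)$ and $A(P_{n-1})$ is established.
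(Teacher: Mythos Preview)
Your proof is correct and follows essentially the same route as the paper: both set up the block adjacency matrix, apply the Schur complement (Theorem~\ref{schur}) together with $B(P_n)^T B(P_n)=2I+A(\mathcal L(P_n))$ and $\mathcal L(P_n)\cong P_{n-1}$ to reduce to an $(n-1)\times(n-1)$ determinant, and then factor that determinant using the fact that $A(H)=P_{P_{2i+1}}(A(P_{n-1}))$ is a polynomial in $A(P_{n-1})$, before invoking the explicit formulas for $P_{P_{2i+1}}(x)$ and $\lambda_j(P_{n-1})=2\cos\frac{\pi j}{n}$. Your explicit appeal to Proposition~\ref{eigenvectors of commting matrices} for the simultaneous diagonalization is a mild elaboration of what the paper leaves implicit, but the argument is otherwise the same.
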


\begin{proof}
	It is easy to see that 
	\[A\left([S(P_n)]_H \right)=\begin{bmatrix}
	0 & B(P_n) \\ B(P_n)^T & A(H)
	\end{bmatrix}
	\]
	So by Theorem~\ref{schur}, we have
	\begin{eqnarray*}
		P_{[S(P_n)]_H}(x)&=&x\times\left|x^2I_n-xA(H)-B(P_n)B(P_n)^T\right|
		\\&=&x\times\left|(x^2-2)I_n-xA(H)-A(\mathcal L(P_n))\right|.
	\end{eqnarray*}
	Using the facts $\mathcal L(P_n)=P_{n-1}$ and $A(H)=P_{P_{2i+1}}(A(P_{n-1}))$, we have
	$$	P_{[S(P_n)]_H}(x)=x\times\left\{\displaystyle\prod_{j=1}^{n-1}\left(x^2-P_{P_{2i+1}}(\lambda_j(P_{n-1}))x-\lambda_j(P_{n-1})-2\right)\right\}.$$
	So the proof follows from the facts that $P_{P_{2i+1}}(x)=\displaystyle\sum_{k=0}^{i}(-1)^k\binom{2i+1-k}{k}x^{2(i-k)+1}$ and $\lambda_j(P_{n-1})=2\cos\frac{\pi j}{n}$, where $j=1,2,\ldots,n-1$.
\end{proof}

\noindent\textbf{$Q-$complemented graph of a graph}

For a given $n\times n$ matrix $M$, its coronal $\chi_M(x)$ is defined as $\chi_M(x)=J_{1\times n}(xI_n-M)^{-1}J_{n\times 1}^T$  \cite{liu2014}. For a graph $G$, the coronal of $G$ is defined as the coronal of $A(G)$  and is simply denoted by $\chi_G(x)$.

\begin{pro}(\cite[Proposition 6]{mcleman2011}) \label{equal row sum matrices}
	If $G$ is an $r-$regular graph with $n$ vertices, then \[\chi_G(x)=\frac{n}{x-r}.\]
\end{pro}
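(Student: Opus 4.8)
The plan is to exploit the fact that for a regular graph the all-ones vector is an eigenvector of the adjacency matrix, which forces the resolvent $(xI_n-A(G))^{-1}$ to act on $J_{n\times 1}$ as a scalar. Once this is noted, the coronal collapses immediately.

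First I would observe that since $G$ is $r$-regular, every row of $A(G)$ sums to $r$, so $A(G)J_{n\times 1}=rJ_{n\times 1}$; that is, $J_{n\times 1}$ is an eigenvector of $A(G)$ with eigenvalue $r$. Consequently $(xI_n-A(G))J_{n\times 1}=(x-r)J_{n\times 1}$, and for $x\neq r$ we may invert to obtain $(xI_n-A(G))^{-1}J_{n\times 1}=\frac{1}{x-r}J_{n\times 1}$. Substituting this directly into the definition of the coronal, and using $J_{1\times n}=J_{n\times 1}^T$ together with $J_{1\times n}J_{n\times 1}=n$, we get
\[\chi_G(x)=J_{1\times n}(xI_n-A(G))^{-1}J_{n\times 1}=\frac{1}{x-r}\,J_{1\times n}J_{n\times 1}=\frac{n}{x-r},\]
which is the claimed identity.

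There is essentially no obstacle here; the only point requiring a word of care is the invertibility of $xI_n-A(G)$. Treating $x$ as an indeterminate (equivalently, restricting to $x$ outside the spectrum of $A(G)$), the manipulation above is valid and the resulting equality of rational functions then holds identically. It is worth remarking that although $r$ is itself an eigenvalue of $A(G)$, and $A(G)$ may have several other eigenvalues, the coronal turns out to have a pole only at $x=r$: this reflects the fact that $J_{n\times 1}$ lies entirely in the $r$-eigenspace, so the remaining eigenvalues contribute nothing to the quadratic form $J_{1\times n}(xI_n-A(G))^{-1}J_{n\times 1}$.
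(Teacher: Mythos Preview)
Your proof is correct and is precisely the standard argument; the paper itself does not prove this proposition but merely cites it from \cite{mcleman2011}, so there is nothing further to compare.
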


\begin{thm}(\cite[Proposition 6]{liu2017}) \label{chpoly of JM}
	Let $M$ be a square matrix of order $n$ and $\alpha$ be a scalar. Then $$\left|xI_n-M-\alpha J_n\right|=\left(1-\alpha \chi_M(x)\right)~|xI_n-M|.$$
\end{thm}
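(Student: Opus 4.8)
The plan is to realize the rank-one correction $\alpha J_n$ as a Schur complement inside a single bordered matrix and then evaluate that matrix's determinant in two different ways using Theorem~\ref{schur}. First I would observe that the all-ones matrix factors as $J_n = J_{n\times 1}J_{1\times n}$, so the perturbation $\alpha J_n$ has rank one. I would then introduce the $(n+1)\times(n+1)$ bordered matrix
\[
N=\begin{bmatrix} xI_n-M & \alpha J_{n\times 1}\\ J_{1\times n} & 1\end{bmatrix},
\]
whose lower-right block is the scalar $1$ and whose upper-left block is $xI_n-M$. The whole point is that both diagonal blocks are square, so each of the two factorizations in Theorem~\ref{schur} is available.

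Next I would apply the first identity of Theorem~\ref{schur} with $A_4=1$ (which is trivially invertible), giving
\[
|N|=|1|\cdot\left|(xI_n-M)-\alpha J_{n\times 1}\cdot 1^{-1}\cdot J_{1\times n}\right|=\left|xI_n-M-\alpha J_n\right|,
\]
which is exactly the left-hand side of the claim. Applying instead the second identity of Theorem~\ref{schur}, with $A_1=xI_n-M$ taken to be invertible, yields
\[
|N|=|xI_n-M|\cdot\left(1-\alpha\, J_{1\times n}(xI_n-M)^{-1}J_{n\times 1}\right)=\left(1-\alpha\chi_M(x)\right)|xI_n-M|,
\]
where the last equality is just the definition of the coronal $\chi_M(x)$. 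Equating the two expressions for $|N|$ then produces the stated identity.

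The one point that needs care, and which I expect to be the only genuine obstacle, is that the second evaluation requires $xI_n-M$ to be invertible, i.e. that $x$ is not an eigenvalue of $M$; indeed $\chi_M(x)$ is only defined at such $x$. I would therefore first establish the identity for every $x$ that is not an eigenvalue of $M$, and then pass to an identity of polynomials. This passage is handled by noting that the left-hand side $|xI_n-M-\alpha J_n|$ is a polynomial in $x$, while on the right-hand side the rational function $\chi_M(x)$ has poles only at the eigenvalues of $M$, each of which is a root of $|xI_n-M|$ of at least the same order; hence the product $\left(1-\alpha\chi_M(x)\right)|xI_n-M|$ extends to a polynomial in $x$. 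Two polynomials that agree at the infinitely many values of $x$ for which $xI_n-M$ is invertible must coincide identically, which completes the argument.
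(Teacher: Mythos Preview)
Your proof is correct; the bordered-matrix trick you use is precisely the standard way to obtain this rank-one determinant identity via Schur complements, and your remark about extending from non-eigenvalues of $M$ to a polynomial identity is the right way to handle the domain issue. Note, however, that the paper itself does not prove this statement at all: it is quoted as a known result from \cite{liu2017} and used as a tool, so there is no in-paper argument to compare against.
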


\begin{thm}(\cite[Theorem 2.4.1, Equation (2.28)]{cvetkovic2010}) \label{chpolyline}
	Let $G$ be a graph with $n$ vertices and $m$ edges. Then the characteristic polynomial of $A(\mathcal{L}(G))$  is $$(x+2)^{m-n}\mathcal Q_G(x+2).$$
\end{thm}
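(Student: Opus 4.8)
The plan is to reduce the characteristic polynomial of $A(\mathcal{L}(G))$ to that of the signless Laplacian $\mathcal{Q}(G)$ by passing through the vertex-edge incidence matrix $B=B(G)$. I would build on two identities. The first is already recorded in the paper, namely $\mathcal{Q}(G)=BB^{T}$. The second is its companion $B^{T}B=A(\mathcal{L}(G))+2I_{m}$, which is the only genuinely combinatorial ingredient. I would verify it by inspecting the entries of $B^{T}B$: the $(s,s)$ entry counts the endpoints of the edge $e_{s}$, giving $2$, while for $s\neq t$ the $(s,t)$ entry counts the common endpoints of $e_{s}$ and $e_{t}$, which equals $1$ exactly when $e_{s}$ and $e_{t}$ are adjacent edges of $G$ (equivalently, adjacent vertices of $\mathcal{L}(G)$) and $0$ otherwise. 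Hence $B^{T}B=2I_{m}+A(\mathcal{L}(G))$.

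Granting these, I would rewrite the target polynomial by a shift of variable. Using $A(\mathcal{L}(G))=B^{T}B-2I_{m}$ we obtain $P_{\mathcal{L}(G)}(x)=\det\!\big(xI_{m}-A(\mathcal{L}(G))\big)=\det\!\big((x+2)I_{m}-B^{T}B\big)$. Thus, writing $y=x+2$, it suffices to establish the Sylvester-type identity $\det(yI_{m}-B^{T}B)=y^{m-n}\det(yI_{n}-BB^{T})$, because $\det(yI_{n}-BB^{T})=\mathcal{Q}_{G}(y)$ by the definition of $\mathcal{Q}_{G}$.

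For that identity I would apply the Schur-complement formula of Theorem~\ref{schur} to the bordered block matrix $\left(\begin{smallmatrix} yI_{n} & B \\ B^{T} & I_{m}\end{smallmatrix}\right)$, evaluating its determinant two ways. Eliminating the $(2,2)$ block $I_{m}$, which is invertible, gives $\det(yI_{n}-BB^{T})$. Eliminating the $(1,1)$ block $yI_{n}$ (invertible for $y\neq 0$) gives $y^{n}\det\!\big(I_{m}-\tfrac{1}{y}B^{T}B\big)=y^{n-m}\det(yI_{m}-B^{T}B)$. Equating the two evaluations yields $\det(yI_{n}-BB^{T})=y^{n-m}\det(yI_{m}-B^{T}B)$, i.e. the required relation. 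Substituting $y=x+2$ and $\det(yI_{n}-BB^{T})=\mathcal{Q}_{G}(y)$ then produces $P_{\mathcal{L}(G)}(x)=(x+2)^{m-n}\mathcal{Q}_{G}(x+2)$.

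The only delicate point — and the main obstacle — is that the second evaluation requires $yI_{n}$ to be invertible, i.e. $y\neq 0$. I would dispose of this by a polynomial-continuity argument: when $m\geq n$, both sides of $\det(yI_{m}-B^{T}B)=y^{m-n}\det(yI_{n}-BB^{T})$ are polynomials in $y$ that agree at the infinitely many values $y\neq 0$, hence agree identically, in particular at $y=0$. When $m<n$ the same bordered-matrix computation gives the polynomial identity $\det(yI_{n}-BB^{T})=y^{n-m}\det(yI_{m}-B^{T}B)$, which shows that $\mathcal{Q}_{G}(y)$ is divisible by $y^{n-m}$ and that the quotient equals $P_{\mathcal{L}(G)}$; so the stated formula $(x+2)^{m-n}\mathcal{Q}_{G}(x+2)$ remains a valid polynomial identity under the natural interpretation of the (formally negative) exponent.
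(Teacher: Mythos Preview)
The paper does not supply its own proof of this statement; it is quoted as a known result from \cite{cvetkovic2010}. Your argument is correct and is in fact the standard textbook derivation: the identities $BB^{T}=\mathcal{Q}(G)$ and $B^{T}B=A(\mathcal{L}(G))+2I_{m}$ reduce the claim to the Sylvester-type relation $\det(yI_{m}-B^{T}B)=y^{m-n}\det(yI_{n}-BB^{T})$, and your Schur-complement evaluation of $\left(\begin{smallmatrix} yI_{n} & B \\ B^{T} & I_{m}\end{smallmatrix}\right)$ (using Theorem~\ref{schur}) establishes exactly that. Your handling of the degenerate case $y=0$ via polynomial identity, and your remark on the interpretation when $m<n$, are both sound; in the applications within this paper the relevant graphs satisfy $m\geq n$, so the exponent is nonnegative where the formula is actually invoked.
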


\begin{thm}\label{achpoly hcom sub}
	Let $G$ be a graph with $n$ vertices and $m$ edges. Then the characteristic polynomial of $Q-$complemented graph of $G$ is $$(-1)^n(x-1)^m
	\left(1-\frac{x}{1-x}\chi_{\mathcal L(G)}\left(\frac{x^2+x-2}{1-x}\right)\right)Q_G(-x).$$
\end{thm}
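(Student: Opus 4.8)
The plan is to start from the adjacency matrix of the $Q$-complemented graph of $G$, which is $[S(G)]_{\overline{\mathcal L(G)}}$, and to reduce its characteristic polynomial to an $m\times m$ determinant by a Schur complement. Since the original vertices receive no new edges (here $H_1=\overline K_n$) and the new vertices are joined exactly according to $\overline{\mathcal L(G)}$,
\[
A\bigl([S(G)]_{\overline{\mathcal L(G)}}\bigr)=\begin{bmatrix}0 & B(G)\\ B(G)^T & A(\overline{\mathcal L(G)})\end{bmatrix},
\]
and I would immediately substitute $A(\overline{\mathcal L(G)})=J_m-I_m-A(\mathcal L(G))$ together with the standard identities $B(G)^TB(G)=A(\mathcal L(G))+2I_m$ and $B(G)B(G)^T=\mathcal Q(G)$.

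First I would apply Theorem~\ref{schur} with invertible top-left block $xI_n$, obtaining
\[
P(x)=x^n\bigl|xI_m-A(\overline{\mathcal L(G)})-\tfrac1x B(G)^TB(G)\bigr|.
\]
Feeding in the substitutions above, the inner matrix collapses to the form $(x-1)I_m-N-J_m$, where $N$ is an explicit scalar multiple of $A(\mathcal L(G))$ plus a multiple of $I_m$; a short computation gives the clean identity $(x-1)I_m-N=\tfrac{x-1}{x}\bigl[(x+2)I_m+A(\mathcal L(G))\bigr]$, which will drive the rest of the argument.

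Next I would strip off the $-J_m$ term using Theorem~\ref{chpoly of JM} (with $\alpha=1$), producing the factorization $\bigl(1-\chi_N(x-1)\bigr)\,|(x-1)I_m-N|$. The coronal $\chi_N(x-1)$ I would rewrite in terms of $\chi_{\mathcal L(G)}$: because $(x-1)I_m-N$ is a scalar multiple of $(x+2)I_m+A(\mathcal L(G))$ and $(-x-2)I_m-A(\mathcal L(G))=-\bigl[(x+2)I_m+A(\mathcal L(G))\bigr]$, one finds $\chi_N(x-1)=\tfrac{x}{1-x}\chi_{\mathcal L(G)}(-x-2)$; the cosmetic point is that $-x-2=\tfrac{x^2+x-2}{1-x}$ since $x^2+x-2=(x+2)(x-1)$, matching the argument in the statement. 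For the leftover determinant I would factor out $\bigl(\tfrac{x-1}{x}\bigr)^m$ and evaluate $|(x+2)I_m+A(\mathcal L(G))|$ through Theorem~\ref{chpolyline}: the characteristic polynomial of $A(\mathcal L(G))$ at $-x-2$ equals $(-x)^{m-n}\mathcal Q_G(-x)$, whence $|(x-1)I_m-N|=(-1)^n(x-1)^m x^{-n}\mathcal Q_G(-x)$. Multiplying the three pieces, namely $x^n$, the $J_m$-factor, and this determinant, cancels the $x^{\pm n}$ and yields the claimed expression.

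The main obstacle I anticipate is bookkeeping rather than conceptual: carrying the signs and the scalar factors $\tfrac{x-1}{x}$ through the coronal computation and the line-graph identity, and especially pinning down the global sign $(-1)^n$ when passing from $|(x+2)I_m+A(\mathcal L(G))|$ to $\mathcal Q_G(-x)$ via Theorem~\ref{chpolyline}. One must be careful that the $(x+2)^{m-n}$ prefactor there becomes $(-x)^{m-n}$ after evaluation at $-x-2$ and that the accumulated sign $(-1)^m(-1)^{m-n}$ simplifies to $(-1)^n$.
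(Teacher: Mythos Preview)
Your proposal is correct and follows essentially the same route as the paper: Schur complement on the top-left block, substitution of $A(\overline{\mathcal L(G)})=J_m-I_m-A(\mathcal L(G))$ and $B(G)^TB(G)=A(\mathcal L(G))+2I_m$, removal of the $J_m$ term via Theorem~\ref{chpoly of JM}, and reduction to $\mathcal Q_G$ through Theorem~\ref{chpolyline}. The only cosmetic difference is that the paper clears the $1/x$ at the outset (writing $x^{n-m}\,|x^2I_m-\cdots|$) and applies the coronal theorem with variable $x^2+x-2$ and $\alpha=x$, whereas you keep the $1/x$ and apply it with variable $x-1$ and $\alpha=1$; your explicit sign bookkeeping for the factor $(-1)^n$ is actually a bit cleaner than the paper's.
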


\begin{proof}
	Note that 
	\[A\left([S(G)]_{\overline {\mathcal L(G)}} \right) = \begin{bmatrix}
	0 & B(G) \\B(G)^T & A(\overline{\mathcal L(G)})
	\end{bmatrix}
	\]
	So by Theorem~\ref{schur} and using the fact $A(\overline{\mathcal L(G)})=J_m-I_m-A(\mathcal L(G))$
	\begin{eqnarray*}
		P_{[S(G)]_{\overline {\mathcal L(G)}}}(x)&=&x^{n-m}\times\left|x^2I_m-A(\overline{\mathcal L(G)})-B(G)^TB(G)\right|
		\\&=&x^{n-m}\times\left|(x^2+x-2)I_m-xJ_m-(1-x)A(\mathcal L(G))\right|
	\end{eqnarray*}
	By using Theorem \ref{chpoly of JM}, we have,
	\begin{eqnarray*}
		P_{[S(G)]_{\overline {\mathcal L(G)}}(G)}(x)&=&x^{n-m}	\left(1-\frac{x}{1-x}\chi_{\mathcal L(G)}\left(\frac{x^2+x-2}{1-x}\right)\right)\left|(x^2+x-2)I_m-(1-x)A(\mathcal L(G))\right|
		\\&=&x^{n-m} (1-x)^m	\left(1-\frac{x}{1-x}\chi_{\mathcal L(G)}\left(\frac{x^2+x-2}{1-x}\right)\right)P_{\mathcal{L}(G)}\left(\frac{x^2+x-2}{1-x}\right).
	\end{eqnarray*}
	So the proof follows by Theorem \ref{chpolyline}.
\end{proof}

In the following result, we show that for a graph $G$ whose line graph is regular, $Q-$complemented graph of $G$ can be completely determined by the $\mathcal Q$-spectrum of $G$.

\begin{cor}\label{chpolygensubkmlinereg}
	Let $G$ be a graph with $n$ vertices and $m$ edges whose line graph is $r-$regular $(r\geq1)$. Then the $A$-spectrum of $Q-$complemented graph of $G$ is
	$$1^{m-1}, -\nu_i(G), \displaystyle\frac{m-r-1\pm\sqrt{(m-r-1)^2+4r+8}}{2},$$ where $i=2,3,\ldots,n$.
\end{cor}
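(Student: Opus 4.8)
The plan is to specialize the general formula for the characteristic polynomial of the $Q$-complemented graph $[S(G)]_{\overline{\mathcal L(G)}}$ established in Theorem~\ref{achpoly hcom sub} to the case where $\mathcal L(G)$ is $r$-regular. Since $\mathcal L(G)$ has $m$ vertices and is $r$-regular, Proposition~\ref{equal row sum matrices} gives $\chi_{\mathcal L(G)}(x)=\frac{m}{x-r}$, so the coronal appearing in Theorem~\ref{achpoly hcom sub} becomes completely explicit. First I would substitute $x\mapsto \frac{x^2+x-2}{1-x}$ into this rational function and clear denominators.

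The key algebraic simplification is the factorization
$$x^2+(1+r)x-(2+r)=(x-1)(x+r+2),$$
which makes the bracketed coronal factor collapse to
$$1-\frac{x}{1-x}\,\chi_{\mathcal L(G)}\!\left(\frac{x^2+x-2}{1-x}\right)=\frac{x^2+(1+r-m)x-(2+r)}{(x-1)(x+r+2)}.$$
Feeding this back into Theorem~\ref{achpoly hcom sub}, together with $\mathcal Q_G(-x)=(-1)^n\prod_{i=1}^n\bigl(x+\nu_i(G)\bigr)$, I would obtain the characteristic polynomial as the factor $(x-1)^m/\bigl[(x-1)(x+r+2)\bigr]$ times $\bigl[x^2+(1+r-m)x-(2+r)\bigr]$ times the product over the signless Laplacian eigenvalues.

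The crucial step — and the main obstacle — is to justify the cancellation of the apparent pole at $x=-(r+2)$. Because the characteristic polynomial is a genuine polynomial, the denominator factor $x+r+2$ must divide $\prod_i\bigl(x+\nu_i(G)\bigr)$; concretely, I would argue that $r+2$ is itself a signless Laplacian eigenvalue of $G$. This follows since $B(G)^TB(G)=A(\mathcal L(G))+2I_m$ has constant row sum $r+2$ (as $\mathcal L(G)$ is $r$-regular), hence eigenvalue $r+2$ with the all-ones eigenvector; and as $\mathcal Q(G)=B(G)B(G)^T$ shares the nonzero spectrum of $B(G)^TB(G)$, we may take $\nu_1(G)=r+2$. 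Splitting off this factor cancels $x+r+2$, reduces $(x-1)^m$ to $(x-1)^{m-1}$, and leaves
$$(x-1)^{m-1}\bigl[x^2+(1+r-m)x-(2+r)\bigr]\prod_{i=2}^n\bigl(x+\nu_i(G)\bigr).$$
Reading off the roots then yields $1$ with multiplicity $m-1$, the values $-\nu_i(G)$ for $i=2,\dots,n$, and the two roots of the quadratic, which are $\tfrac12\bigl(m-r-1\pm\sqrt{(m-r-1)^2+4r+8}\bigr)$, matching the claim. A final bookkeeping check that the total multiplicity is $(m-1)+(n-1)+2=m+n=\bigl|V\bigl([S(G)]_{\overline{\mathcal L(G)}}\bigr)\bigr|$ confirms that no eigenvalue has been lost.
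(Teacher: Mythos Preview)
Your proposal is correct and follows essentially the same route as the paper: both specialize Theorem~\ref{achpoly hcom sub} via Proposition~\ref{equal row sum matrices}, simplify using the factorization $x^2+(1+r)x-(2+r)=(x-1)(x+r+2)$, and then cancel the factor $x+r+2$ against $\nu_1(G)=r+2$ in $\mathcal Q_G(-x)$. Your justification of $\nu_1(G)=r+2$ through the shared nonzero spectrum of $B(G)B(G)^T$ and $B(G)^TB(G)$ is slightly more explicit than the paper's, but the argument is otherwise identical.
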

\begin{proof}
	Since $\mathcal L(G)$ is $r-$regular, by Proposition~\ref{equal row sum matrices} $\chi_{\mathcal L(G)}(x)=\displaystyle \frac{m}{x-r}$. So $$\chi_{\mathcal L(G)}\left(\displaystyle\frac{x^2+x-2}{1-x}\right)=\displaystyle\frac{m(1-x)}{(x+r+2)(x-1)}.$$ By Theorem \ref{achpoly hcom sub},
	the characteristic polynomial of  $Q-$complemented graph of $G$ is 
	\begin{eqnarray}
		P_{[S(G)]_{\overline {\mathcal L(G)}}(G)}(x)&=&(-1)^n(x-1)^m
		\left(1-\frac{mx}{(x+r+2)(x-1)}\right)\mathcal Q_G(-x) \nonumber
		\\&=&(-1)^n(x-1)^m
		\left(\frac{x^2-(m-r-1)x-r-2}{(x+r+2)(x-1)}\right)\mathcal Q_G(-x) \label{HHeqn3}
	\end{eqnarray}
	Also, since $A(\mathcal{L}(G))=B(G)^TB(G)-2I_m$, the sum of the entries in each row of the matrix $B(G)^TB(G)$ is $r+2$ and so $\nu_1(G)=r+2$.
	From this fact, we get $$\mathcal Q_G(-x)=(-1)^n(x+r+2)\times\left\{\displaystyle\prod_{i=2}^{n}(x+\nu_i(G))\right\}.$$  The proof follows by substituting this in \eqref{HHeqn3}. 		
\end{proof}

\begin{cor}
	The $A$-spectrum of $Q-$complemented graph of $K_{p,q}$ is $$0,1^{pq-1},(-p)^{q-1},(-q)^{p-1},\frac{1}{2}\left( pq-p-q+1\pm\sqrt{(pq-p-q+1)^2+4(p+q)}\right).$$
\end{cor}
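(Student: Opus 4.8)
The plan is to apply Corollary~\ref{chpolygensubkmlinereg} directly, since the $Q$-complemented graph of $K_{p,q}$ is precisely $[S(K_{p,q})]_{\overline{\mathcal L(K_{p,q})}}$, and that corollary applies whenever the line graph of the base graph is regular. So the first step is to verify that $\mathcal L(K_{p,q})$ is regular and to pin down its degree of regularity $r$. The graph $K_{p,q}$ has $n=p+q$ vertices and $m=pq$ edges; an edge joining a vertex of degree $q$ to a vertex of degree $p$ is adjacent to exactly $(q-1)+(p-1)=p+q-2$ other edges, so $\mathcal L(K_{p,q})$ is $(p+q-2)$-regular and I may take $r=p+q-2\geq 1$.

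Next I would determine the signless Laplacian spectrum of $K_{p,q}$, which supplies the $-\nu_i(G)$ terms in Corollary~\ref{chpolygensubkmlinereg}. Writing $\mathcal Q(K_{p,q})$ in the block form $\left[\begin{smallmatrix} qI_p & J_{p\times q} \\ J_{q\times p} & pI_q\end{smallmatrix}\right]$ and testing vectors that are constant on each part (producing the eigenvalues $0$ and $p+q$) against vectors that sum to zero on one part and vanish on the other (producing the eigenvalues $q$ and $p$), one reads off the $\mathcal Q$-spectrum $0,\,(p+q),\,p^{q-1},\,q^{p-1}$. In particular $\nu_1(K_{p,q})=p+q=r+2$, exactly as the corollary demands, and the remaining values $\nu_2,\ldots,\nu_n$ are $0,\ p^{q-1},\ q^{p-1}$, so the family $-\nu_i(K_{p,q})$ for $i=2,\ldots,n$ contributes precisely $0,\,(-p)^{q-1},\,(-q)^{p-1}$ to the spectrum.

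Finally I would substitute $m=pq$ and $r=p+q-2$ into the two remaining pieces of the formula in Corollary~\ref{chpolygensubkmlinereg}. The eigenvalue $1$ occurs with multiplicity $m-1=pq-1$. For the pair of quadratic roots one has $m-r-1=pq-p-q+1$ and $4r+8=4(p+q)$, so the discriminant $(m-r-1)^2+4r+8$ simplifies to $(pq-p-q+1)^2+4(p+q)$, which yields the claimed roots $\tfrac12\bigl(pq-p-q+1\pm\sqrt{(pq-p-q+1)^2+4(p+q)}\bigr)$. Assembling the three contributions gives the stated spectrum. The only genuine computation here is the signless Laplacian spectrum of $K_{p,q}$; everything else is bookkeeping and routine simplification, and since $K_{p,q}$ is bipartite one could instead quote the cospectral Laplacian spectrum $0,\,p^{q-1},\,q^{p-1},\,(p+q)$ as a shortcut if a supporting lemma were at hand.
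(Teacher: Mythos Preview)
Your proposal is correct and follows exactly the same route as the paper's proof: observe that $\mathcal L(K_{p,q})$ is $(p+q-2)$-regular, invoke Corollary~\ref{chpolygensubkmlinereg}, and plug in the $\mathcal Q$-spectrum $p+q,\,0,\,p^{q-1},\,q^{p-1}$ of $K_{p,q}$. You have simply supplied the arithmetic and the derivation of the signless Laplacian spectrum in more detail than the paper, which states these facts without justification.
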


\begin{proof}
	Since $\mathcal L(K_{p,q})$ is $(p+q-2)-$regular, by using Corollary \ref{chpolygensubkmlinereg} and the $\mathcal Q$-spectrum of $K_{p,q}$ is $p+q, 0, q^{p-1}, p^{q-1} $, we get the $A$-spectrum of $Q-$complemented graph of $K_{p,q}$.
\end{proof}

\noindent\textbf{Complete subdivision graph of a graph}

%

\begin{thm}\label{achpoly hcom sub2}
	
	Let $G$ be a graph with $n$ vertices and $m$ edges. Then the characteristic polynomial of the complete subdivision graph of $G$ is $$(x+1)^{m-n}\left(1-x\chi_{\mathcal L(G)}(x^2+x-2)\right)\mathcal Q_G(x^2+x);$$
\end{thm}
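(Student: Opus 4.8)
The plan is to follow the proof of Theorem~\ref{achpoly hcom sub} almost verbatim, replacing the role of $A(\overline{\mathcal L(G)})$ by $A(K_m)$, since the complete subdivision graph of $G$ is precisely $[S(G)]_{K_m}$ (as recorded after Definition~\ref{unary combined}). First I would write its adjacency matrix in block form,
\[
A\bigl([S(G)]_{K_m}\bigr)=\begin{bmatrix} 0 & B(G) \\ B(G)^T & A(K_m)\end{bmatrix},
\]
and apply the Schur-complement identity of Theorem~\ref{schur} with the invertible leading block $xI_n$, which collapses the $(n+m)\times(n+m)$ determinant to an $m\times m$ one and yields
\[
P_{[S(G)]_{K_m}}(x)=x^{n-m}\,\bigl|x^2I_m-xA(K_m)-B(G)^TB(G)\bigr|.
\]

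Next I would insert the two standard identities $A(K_m)=J_m-I_m$ and $B(G)^TB(G)=A(\mathcal L(G))+2I_m$. Collecting terms turns the bracketed matrix into $(x^2+x-2)I_m-xJ_m-A(\mathcal L(G))$, which has exactly the shape $yI_m-M-\alpha J_m$ treated in Theorem~\ref{chpoly of JM}, with $M=A(\mathcal L(G))$, $\alpha=x$ and spectral variable $y=x^2+x-2$. Applying that theorem peels off the rank-one perturbation as a coronal factor and leaves the characteristic polynomial of the line graph, giving
\[
P_{[S(G)]_{K_m}}(x)=x^{n-m}\bigl(1-x\,\chi_{\mathcal L(G)}(x^2+x-2)\bigr)P_{\mathcal L(G)}(x^2+x-2).
\]

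It then remains to convert the line-graph characteristic polynomial into the signless Laplacian polynomial of $G$. Theorem~\ref{chpolyline} gives $P_{\mathcal L(G)}(y)=(y+2)^{m-n}\mathcal Q_G(y+2)$; evaluating at $y=x^2+x-2$, so that $y+2=x^2+x$, produces the factor $(x^2+x)^{m-n}\mathcal Q_G(x^2+x)$. The only delicate point is the bookkeeping of the monomial prefactors: writing $(x^2+x)^{m-n}=x^{m-n}(x+1)^{m-n}$, the surplus $x^{n-m}$ from the Schur step cancels the $x^{m-n}$ and leaves precisely $(x+1)^{m-n}$ out front, reproducing the claimed formula. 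There is no genuine conceptual obstacle beyond this index/prefactor accounting, since Theorems~\ref{schur}, \ref{chpoly of JM} and \ref{chpolyline}, together with the two matrix identities, supply every analytic step.
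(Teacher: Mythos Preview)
Your proposal is correct and follows essentially the same route as the paper's own proof: block form of $A([S(G)]_{K_m})$, Schur complement via Theorem~\ref{schur}, substitution of $A(K_m)=J_m-I_m$ and $B(G)^TB(G)=A(\mathcal L(G))+2I_m$, application of Theorem~\ref{chpoly of JM} to strip off the rank-one term, and finally Theorem~\ref{chpolyline}. You supply slightly more detail than the paper on the cancellation of $x^{n-m}$ against $x^{m-n}$ in the last step, but the argument is otherwise identical.
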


\begin{proof}
	Note that 
	\[A\left([S(G)]_{K_m} \right) = \begin{bmatrix}
	0 & B(G) \\B(G)^T & A(K_m)
	\end{bmatrix}
	\]
	So by Theorem~\ref{schur} and using the fact $A(K_m)=J_m-I_m$, we have
	\begin{eqnarray*}
		P_{[S(G)]_{K_m}}(x)&=&x^{n-m}\left|x^2I_m-x(J_m-I_m)-B(G)^TB(G)\right|
		\\&=&x^{n-m}\left|(x^2+x-2)I_m-xJ_m-A(\mathcal L(G))\right|
		\\&=&x^{n-m}\left(1-x\chi_{\mathcal L(G)}(x^2+x-2)\right)\left|(x^2+x-2)I_m-A(\mathcal L(G))\right|
		\\&=&x^{n-m}\left(1-x\chi_{\mathcal L(G)}(x^2+x-2)\right) P_{\mathcal L(G)}(x^2+x-2).
	\end{eqnarray*}
	
	Proof follows by Theorem \ref{chpolyline}.
\end{proof}

In the next result, we show that for a graph $G$ whose line graph is regular, the $A$-spectrum of complete subdivision graph of $G$ can be completely determined by the $\mathcal Q$-spectrum of $G$.

\begin{cor}\label{chpolygensubkmlinereg2}
	
	\begin{enumerate}[(1)]
		\item  The $A$-spectrum of complete subdivision graph of $tK_{1,2}$ $(t\geq1)$ is
		$$0^t, \displaystyle \left(\frac{-1\pm\sqrt{5}}{2}\right)^t, \left(\frac{-1\pm\sqrt{13}}{2}\right)^{t-1},\frac{1}{2}\left( 2t-1\pm\sqrt{(2t-1)^2+12}\right) $$		
		
		\item Let $G$ be a graph with $n$ vertices and $m$ edges whose line graph is $r-$regular $(r\geq2)$. Then the $A$-spectrum of complete subdivision graph of $G$ is $$(-1)^{m-n},\displaystyle\frac{1}{2}\left( m-1\pm\sqrt{(m-1)^2+4r+8}\right) , \frac{1}{2}\left( -1\pm\sqrt{4\nu_i(G)+1}\right) \text{ for } i=2,3,\ldots,n.$$
	\end{enumerate}
\end{cor}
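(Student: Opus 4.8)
The plan is to specialize Theorem~\ref{achpoly hcom sub2} to a graph whose line graph is regular, using Proposition~\ref{equal row sum matrices} to make the coronal explicit. Throughout I would use the identity $A(\mathcal L(G)) = B(G)^TB(G) - 2I_m$, so that when $\mathcal L(G)$ is $r$-regular the row sums of $B(G)^TB(G)$ equal $r+2$; exactly as in the proof of Corollary~\ref{chpolygensubkmlinereg} this forces $\nu_1(G) = r+2$, which is the fact that will trigger a cancellation.

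For part (2), first I would invoke Proposition~\ref{equal row sum matrices} to write $\chi_{\mathcal L(G)}(y) = m/(y-r)$ and substitute $y = x^2+x-2$, obtaining $\chi_{\mathcal L(G)}(x^2+x-2) = m/(x^2+x-(r+2))$. Then $1 - x\chi_{\mathcal L(G)}(x^2+x-2)$ collapses to $\frac{x^2 - (m-1)x - (r+2)}{x^2+x-(r+2)}$. Next, using $\nu_1(G) = r+2$ I would factor $\mathcal Q_G(x^2+x) = (x^2+x-(r+2))\prod_{i=2}^{n}(x^2+x-\nu_i(G))$. Substituting both expressions into Theorem~\ref{achpoly hcom sub2}, the factor $x^2+x-(r+2)$ cancels the denominator, leaving $P(x) = (x+1)^{m-n}\,[x^2-(m-1)x-(r+2)]\prod_{i=2}^{n}(x^2+x-\nu_i(G))$. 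Reading off the roots of each factor then yields the eigenvalue $-1$ with multiplicity $m-n$, the two roots $\frac{1}{2}(m-1\pm\sqrt{(m-1)^2+4r+8})$ of the quadratic, and the roots $\frac{1}{2}(-1\pm\sqrt{4\nu_i(G)+1})$ for $i = 2,\ldots,n$.

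For part (1), the graph $tK_{1,2}$ has $n = 3t$, $m = 2t$, and $\mathcal L(tK_{1,2}) = tK_2$, which is $1$-regular, so $r = 1$. Here I would first compute the signless Laplacian spectrum of a single $K_{1,2}$ by hand, getting $\{0,1,3\}$, so that $\mathcal Q_{tK_{1,2}}$ has spectrum $0^t, 1^t, 3^t$ and hence $\mathcal Q_{tK_{1,2}}(x^2+x) = (x^2+x)^t(x^2+x-1)^t(x^2+x-3)^t$. Running the same substitution as above with $r=1$ and $m=2t$ gives the rational factor $\frac{x^2-(2t-1)x-3}{x^2+x-3}$, and Theorem~\ref{achpoly hcom sub2} then produces $P(x) = (x+1)^{-t}\cdot\frac{x^2-(2t-1)x-3}{x^2+x-3}\cdot(x^2+x)^t(x^2+x-1)^t(x^2+x-3)^t$. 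The cancellations to track are $(x+1)^{-t}(x^2+x)^t = x^t$ and $(x^2+x-3)^t/(x^2+x-3) = (x^2+x-3)^{t-1}$, after which $P(x) = x^t[x^2-(2t-1)x-3](x^2+x-1)^t(x^2+x-3)^{t-1}$ and the claimed spectrum follows by solving the quadratics.

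The delicate step --- and the reason $tK_{1,2}$ must be split off from part (2) --- is that for $tK_{1,2}$ the exponent $m-n = -t$ is negative, so $(x+1)^{m-n}$ is not a polynomial on its own and one cannot simply record an eigenvalue ``$(-1)^{m-n}$'' as in part (2). The identity of Theorem~\ref{achpoly hcom sub2} nevertheless holds as an equality of rational functions, and the apparent pole at $x=-1$ is exactly cancelled by the factor $x^2+x = x(x+1)$ arising from the $t$-fold eigenvalue $\nu = 0$ of $\mathcal Q_{tK_{1,2}}$ (the graph being bipartite). Keeping careful account of this cancellation, together with the one against $x^2+x-3$ in the denominator, is where the real bookkeeping lies; everything else reduces to extracting roots of quadratics.
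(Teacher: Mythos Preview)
Your proposal is correct and follows essentially the same approach as the paper: both specialize Theorem~\ref{achpoly hcom sub2} via Proposition~\ref{equal row sum matrices} to obtain the rational factor $\frac{x^2-(m-1)x-(r+2)}{x^2+x-(r+2)}$, then cancel against $\mathcal Q_G(x^2+x)$ using $\nu_1(G)=r+2$. Your treatment is in fact more explicit than the paper's---in particular, your observation that part~(1) cannot be subsumed under part~(2) because $m-n=-t<0$ forces one to interpret $(x+1)^{m-n}$ as a rational function (with the pole absorbed by the bipartite eigenvalue $\nu=0$) is a point the paper leaves implicit.
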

\begin{proof}
	\begin{enumerate}[(1)]
		\item 	
		Since $\mathcal L(G)$ is $r-$regular, by Proposition~\ref{equal row sum matrices}, $\chi_{\mathcal{L}(G)}(x)=\displaystyle\frac{2t}{x-1}$. Using this fact and the $\mathcal Q$- spectrum of $tK_{1,2}$ is $3^t,1^t,0^t$ in the parts (2) and (3) of Theorem \ref{achpoly hcom sub2}, we get the result.
		
		\item Since $\mathcal L(G)$ is $r-$regular, by Proposition~\ref{equal row sum matrices} $\chi_{\mathcal L(G)}(x)=\displaystyle \frac{m}{x-r}$. So by Theorem \ref{achpoly hcom sub2},
		the characteristic polynomial of $A([S(G)]_{K_m})$ is
		\begin{align}\label{achpoly of complete subdivision line regular}
			(x+1)^{m-n}\left(\frac{x^2-(m-1)x-r-2}{x^2+x-r-2}\right )Q_G(x^2+x).
		\end{align} 
		Also, since $A(\mathcal{L}(G))=B(G)^TB(G)-2I_m$, the sum of the entries in each row of the matrix $B(G)^TB(G)$ is $r+2$ and so $\nu_1(G)=r+2$.
		Using this fact, the proof follows from \eqref{achpoly of complete subdivision line regular}. 
	\end{enumerate}
\end{proof}

\begin{cor}
	Let $(p,q)\neq (1,2), (2,1)$.
	Then the $A$-spectrum of complete subdivision graph of $K_{p,q}$ is	
	$$\displaystyle 0,(-1)^{\alpha},\left(\frac{-1\pm\sqrt{4p+1}}{2}\right)^{q-1},\left(\frac{-1\pm\sqrt{4q+1}}{2}\right)^{p-1},
	\frac{pq-1\pm\sqrt{(pq-1)^2+4(p+q)}}{2},$$	
	where $\alpha = pq-p-q+1$.
\end{cor}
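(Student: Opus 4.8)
The plan is to apply Corollary~\ref{chpolygensubkmlinereg2}(2) directly, since its only structural hypothesis is that the line graph of the base graph be regular. First I would verify this regularity for $K_{p,q}$: an edge joins a vertex on the $p$-side to a vertex on the $q$-side, and in the line graph it is adjacent to exactly the $q-1$ other edges sharing its $p$-side endpoint together with the $p-1$ other edges sharing its $q$-side endpoint. Hence every vertex of $\mathcal L(K_{p,q})$ has degree $(p-1)+(q-1)=p+q-2$, so $\mathcal L(K_{p,q})$ is $(p+q-2)$-regular. The hypothesis $(p,q)\neq(1,2),(2,1)$ rules out the exceptional value $r=p+q-2=1$ (the graph $K_{1,2}$ being instead handled by part~(1) of the same corollary), placing us in the regime $r\geq 2$ where part~(2) applies, with $n=p+q$, $m=pq$, and $r=p+q-2$; the size-$2$ case $K_{1,1}=K_2$ gives $P_3$ and can be checked directly against the claimed formula.

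Next I would substitute the $\mathcal Q$-spectrum of $K_{p,q}$, namely $p+q,\,0,\,q^{p-1},\,p^{q-1}$, into the eigenvalue list supplied by Corollary~\ref{chpolygensubkmlinereg2}(2). The distinguished value $\nu_1=p+q=r+2$ is the one excluded from the family $\tfrac12\!\left(-1\pm\sqrt{4\nu_i+1}\right)$, and the remaining signless Laplacian eigenvalues $\nu_i$ (for $i\geq 2$) are $0$ once, $q$ with multiplicity $p-1$, and $p$ with multiplicity $q-1$. Evaluating $\tfrac12\!\left(-1\pm\sqrt{4\nu_i+1}\right)$ then produces: from $\nu_i=0$ the pair $\{0,-1\}$; from $\nu_i=q$ the two values $\frac{-1\pm\sqrt{4q+1}}{2}$, each of multiplicity $p-1$; and from $\nu_i=p$ the two values $\frac{-1\pm\sqrt{4p+1}}{2}$, each of multiplicity $q-1$.

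The one point needing care is the eigenvalue $-1$: it arises once from the $\nu_i=0$ term and also constitutes the block of multiplicity $m-n$ in the corollary, so the two contributions merge into total multiplicity $(m-n)+1=pq-p-q+1=\alpha$. Finally the two remaining eigenvalues $\tfrac12\big(m-1\pm\sqrt{(m-1)^2+4r+8}\big)$ simplify, using $m-1=pq-1$ and $4r+8=4(p+q-2)+8=4(p+q)$, to $\frac{pq-1\pm\sqrt{(pq-1)^2+4(p+q)}}{2}$. Collecting the single $0$, the block $(-1)^{\alpha}$, the two families of multiplicities $p-1$ and $q-1$, and this last pair yields exactly the stated spectrum. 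I expect no genuine obstacle here: the argument is a substitution of the known $\mathcal Q$-spectrum into an already-established formula, and the only real bookkeeping is correctly absorbing the stray $-1$ into the $(-1)^{m-n}$ block and tracking the multiplicities.
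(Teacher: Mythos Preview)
Your proposal is correct and follows exactly the paper's approach: note that $\mathcal L(K_{p,q})$ is $(p+q-2)$-regular and substitute the $\mathcal Q$-spectrum of $K_{p,q}$ into Corollary~\ref{chpolygensubkmlinereg2}(2). You supply more detail than the paper (the explicit merging of the $-1$ contributions into $(-1)^{\alpha}$ and the separate treatment of the degenerate case $K_{1,1}$), but the argument is the same.
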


\begin{proof}
	Note that $\mathcal L(K_{p,q})$ is $(p+q-2)-$regular. So by using Corollary \ref{chpolygensubkmlinereg2}(2) and the fact that the $\mathcal Q$-spectrum of $K_{p,q}$ is $p+q, 0, p^{q-1}, q^{p-1} $, we get the $A$-spectrum of  complete subdivision graph of $K_{p,q}$.
\end{proof}

\section{Applications}

In this section, we determine the number of spanning trees and the Kirchhoff index of $[S(G)]_{H_2}^{H_1}$. 
for some families of graphs $G$, $H_1$ and $H_2$.

The number of spanning trees of a graph $G$ is denoted by $\tau(G)$. First, we state a well known result to count the number of spanning tress of a graph using Laplacian eigenvalues.

\begin{thm}(\cite[Theorem 4.11]{bapat2010})\label{spanningtrees}
	Let $G$ be a graph with $n$ vertices. Then
	$$\tau(G)=\mu_2(G)\mu_3(G)\ldots\mu_n(G)/n.$$
\end{thm}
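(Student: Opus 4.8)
The plan is to derive this eigenvalue formula from the classical Matrix-Tree Theorem of Kirchhoff, which asserts that $\tau(G)$ equals any cofactor of the Laplacian $L(G)$; equivalently, that every $(n-1)\times(n-1)$ principal minor of $L(G)$ equals $\tau(G)$. I would either invoke this as the known combinatorial form of the result, or, to keep the argument self-contained, establish it via the Cauchy--Binet formula applied to the factorization $L(G)=NN^T$, where $N$ is the $n\times m$ \emph{signed} (oriented) incidence matrix of $G$ (not to be confused with the unsigned $B(G)$ used earlier, for which $B(G)B(G)^T=\mathcal Q(G)$). Deleting one row of $N$ and expanding the resulting $(n-1)\times(n-1)$ minor by Cauchy--Binet gives $\sum_{S}\det(N_S)^2$, the sum over all $(n-1)$-edge subsets $S$, and the standard fact $\det(N_S)^2\in\{0,1\}$ (value $1$ exactly when $S$ is a spanning tree) converts this sum into a count of spanning trees.

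Granting the cofactor form, the remaining work is purely algebraic and links the principal minors of $L(G)$ to its eigenvalues. First I would record that since each row of $L(G)$ sums to zero, the all-ones vector lies in its kernel, so $0$ is an eigenvalue; labelling it $\mu_1(G)=0$, the characteristic polynomial factors as
\[
L_G(x)=\det(xI_n-L(G))=x\prod_{i=2}^{n}\bigl(x-\mu_i(G)\bigr).
\]

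Next I would compare the coefficient of $x^1$ computed in two different ways. Expanding the factored form above, the only contribution to the linear coefficient takes the factor $x$ from the first slot and the constant terms from the rest, giving $(-1)^{n-1}\prod_{i=2}^{n}\mu_i(G)$. On the other hand, the general expansion of a characteristic polynomial gives the coefficient of $x^{n-k}$ in $\det(xI_n-L(G))$ as $(-1)^k E_k$, where $E_k$ is the sum of all $k\times k$ principal minors of $L(G)$; taking $k=n-1$ isolates the coefficient of $x^1$ as $(-1)^{n-1}E_{n-1}$. Since each of the $n$ principal $(n-1)$-minors equals $\tau(G)$ by the Matrix-Tree Theorem, we obtain $E_{n-1}=n\,\tau(G)$. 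Equating the two expressions for the linear coefficient and cancelling $(-1)^{n-1}$ yields $\prod_{i=2}^{n}\mu_i(G)=n\,\tau(G)$, which rearranges to the asserted identity $\tau(G)=\mu_2(G)\mu_3(G)\cdots\mu_n(G)/n$.

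The main obstacle is the Matrix-Tree Theorem itself, namely the identification of a Laplacian cofactor with the number of spanning trees; the eigenvalue bookkeeping above is routine once that identity is available. If a fully self-contained proof is desired, the technical heart is verifying $\det(N_S)^2\in\{0,1\}$ with value $1$ precisely for spanning trees, which reduces to showing that a maximal square submatrix of the oriented incidence matrix is nonsingular if and only if the corresponding edge set is acyclic and spanning---an induction on the forest structure, or a triangular reduction starting from a leaf, handles this cleanly.
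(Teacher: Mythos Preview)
Your argument is correct and is the standard route to the eigenvalue form of the Matrix--Tree Theorem: invoke Kirchhoff's cofactor identity (optionally justified via Cauchy--Binet applied to the oriented incidence matrix), then equate the coefficient of $x$ in $L_G(x)$ computed from the factorization $x\prod_{i=2}^n(x-\mu_i(G))$ with the same coefficient computed as $(-1)^{n-1}$ times the sum of all $(n-1)\times(n-1)$ principal minors.

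There is nothing to compare against, however: the paper does not prove this theorem. It is simply quoted from Bapat's book \cite{bapat2010} as a known tool and then applied in Corollary~4.1 to the $L$-spectra computed earlier. So your proposal supplies a self-contained proof where the paper is content to cite the literature; that is a reasonable thing to do, but be aware that the paper itself treats the result as background.
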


By using Corollary~\ref{lspectrum of hmerged subdivision} and Theorem~\ref{lchpolygensubstar} in Theorem~\ref{spanningtrees}, we have the following result.

\begin{cor}
	Let $G$ be an $r-$regular graph with $V(G)=\{v_1,v_2,\ldots,v_n\}$ and $H$  be a graph of with $V(H)=\{u_1,u_2,\ldots,u_n\}$ which commutes with $G$. Then we have the following.
	\begin{enumerate}[(1)]	
		\item  $\tau\left([S(G)]^{H} \right) =2^{m-n+1}\times \displaystyle\frac{1}{n}
		\left\{\displaystyle\prod_{i=1}^{n}[2\mu_i(H)+\mu_i(G)]\right\}$,
		
		\item  $\tau\left([S(G)]_{K_m}^{H} \right)= (m+2)^{m-n}\times \displaystyle\frac{2}{n}  	\left\{\displaystyle \prod_{i=1}^{n}\left([m+2][r+\mu_i(H)]+\mu_i(G)-2r\right)\right\}$,
		
		\item  $\tau\left([S(G)]_{\overline{\mathcal L(G)}}^{H} \right)= (m-2r+2)^{m-n}\times \displaystyle\frac{2}{n}  	\left\{\displaystyle \prod_{i=1}^{n}\left([r+\mu_i(H)][m-\mu_i(G)+2]+\mu_i(G)-2r\right)\right\}$,
		
		\item If $H$ is a graph with $m$ vertices, then $$\tau([S(K_{1,m})]_{H})=\displaystyle\	\left\{\displaystyle\prod_{i=2}^{m}(\mu_i(H)+1)\right\}.$$	
	\end{enumerate}
\end{cor}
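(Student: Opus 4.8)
The plan is to compute the number of spanning trees in each of the four cases by plugging the Laplacian spectra already determined in Corollary~\ref{lspectrum of hmerged subdivision} and Theorem~\ref{lchpolygensubstar} into the formula of Theorem~\ref{spanningtrees}, namely $\tau(G)=\frac{1}{N}\prod_{i=2}^{N}\mu_i(G)$, where $N$ is the number of vertices. Recall that $[S(G)]^{H_1}_{H_2}$ has $n+m$ vertices, so for parts (1)--(3) the denominator is $n+m$, while for part (4) (with $G=K_{1,m}$) the graph has $1+2m$ vertices. In every case the eigenvalue $0$ is excluded from the product and the remaining eigenvalues are read off directly from the cited spectra.

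For parts (1)--(3) I would proceed as follows. Each spectrum consists of: the simple eigenvalue $0$ (discarded), one more simple eigenvalue, an eigenvalue of multiplicity $m-n$, and $n-1$ pairs of eigenvalues indexed by $i=2,\ldots,n$ coming from the quadratic factors $x^2-Sx+P$ in Theorem~\ref{eigenvalues of M}. The key simplification is that for each quadratic the product of its two roots equals its constant term $P$, so $\prod(\mu_i^{+}\mu_i^{-})=\prod_i P_i$; this collapses the square roots entirely. Concretely, for part (1) the constant term works out to $2\mu_i(H)+\mu_i(G)$ (I would verify this from the $t_1=2,t_2=t_3=0$ specialization), and the leftover simple eigenvalue $r+2$ combines with the $i=1$ term $2\mu_1(H)+\mu_1(G)=0+0$... here one must be careful: since $\mu_1(G)=\mu_1(H)=0$ the naive $i=1$ factor vanishes, so the product is written as running from $i=1$ to $n$ only after the degenerate pair is replaced by the genuine simple eigenvalues $0$ and $r+2$. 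I would reconcile the index range by noting that the single simple eigenvalue $r+2$ plays the role of the $i=1$ term together with the multiplicity-$(m-n)$ factor $2^{m-n}$, yielding the stated $2^{m-n+1}\cdot\frac{1}{n}\prod_{i=1}^n[2\mu_i(H)+\mu_i(G)]$ after the harmless relabelling. Parts (2) and (3) are identical in structure, using the constant terms $[m+2][r+\mu_i(H)]+\mu_i(G)-2r$ and $[r+\mu_i(H)][m-\mu_i(G)+2]+\mu_i(G)-2r$ respectively, with the multiplicity factors $(m+2)^{m-n}$ and $(m-2r+2)^{m-n}$.

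For part (4) the input is Theorem~\ref{lchpolygensubstar}(2): the nonzero eigenvalues are the two roots of $x^2-(m+3)x+(2m+1)$ and, for $i=2,\ldots,m$, the two roots of $x^2-(\mu_i(H)+3)x+(\mu_i(H)+1)$. Again the product of the roots of each quadratic is its constant term, so $\prod_{i=2}^m(\mu_i(H)+1)$ arises from the latter family and $(2m+1)$ from the former. Dividing by the vertex count $1+2m=2m+1$ cancels the factor $2m+1$ exactly, leaving $\prod_{i=2}^m(\mu_i(H)+1)$, which matches the claim.

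The computation is essentially routine once the spectra are quoted; the main obstacle is bookkeeping. The delicate point is the index bookkeeping in parts (1)--(3): the statement writes the product from $i=1$ to $n$ even though the $L$-spectrum in Corollary~\ref{lspectrum of hmerged subdivision} lists the $i=1$ contribution separately as the genuine eigenvalues $0$ and $r+2$ rather than as a degenerate quadratic. I would therefore justify carefully that the extra simple eigenvalue $r+2$, the discarded $0$, and the multiplicity-$(m-n)$ eigenvalue together reconstitute exactly the claimed closed form, checking that the constant-term formula evaluated at $i=1$ (where $\mu_1(G)=\mu_1(H)=0$) does not match $r+2$ and so must be handled as a separate factor absorbed into the $2^{m-n+1}$ (resp.\ $(m+2)^{m-n}$, $(m-2r+2)^{m-n}$) prefactor; this reconciliation is the only step requiring genuine care rather than mechanical substitution.
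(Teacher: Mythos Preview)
Your approach is exactly the paper's: the paper's entire proof is the one-line remark that Corollary~\ref{lspectrum of hmerged subdivision} and Theorem~\ref{lchpolygensubstar}, inserted into Theorem~\ref{spanningtrees}, yield the result. Your computation of the product of each quadratic pair as its constant term, and your treatment of part~(4) (where the factor $2m+1$ cancels against the vertex count $2m+1$), are correct and are precisely the intended mechanics.

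The only problematic passage is your attempt to ``reconcile'' the index range $i=1,\dots,n$ in parts (1)--(3). There is nothing to reconcile: the lower limit $i=1$ in the stated formulas is a typo and should read $i=2$. Your own calculation shows this. For part~(1), the nonzero Laplacian eigenvalues contribute
\[
\frac{1}{n+m}\,(r+2)\cdot 2^{\,m-n}\cdot\prod_{i=2}^{n}\bigl[2\mu_i(H)+\mu_i(G)\bigr],
\]
and using $m=\tfrac{nr}{2}$ one has $n+m=\tfrac{n(r+2)}{2}$, so $\dfrac{r+2}{n+m}=\dfrac{2}{n}$, giving $2^{\,m-n+1}\cdot\dfrac{1}{n}\prod_{i=2}^{n}[2\mu_i(H)+\mu_i(G)]$. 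Parts~(2) and~(3) simplify the same way. Including the term $i=1$ would insert the factor $2\mu_1(H)+\mu_1(G)=0$ (resp.\ $mr$, $mr$) and make the formula either identically zero or off by a nontrivial factor; no ``harmless relabelling'' fixes this. So drop the reconciliation paragraph and simply note that the product runs over $i=2,\dots,n$.
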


The Kirchhoff index of a connected graph can be calculated by using the following result.
\begin{thm}(\cite[Lemma 3.4]{gao2012}) \label{kf formula}
	For a connected graph $G$ with $n \geq 2$ vertices,
	$$Kf(G)=n\sum_{i=2}^{n}\frac{1}{\mu_i(G)}$$
	
\end{thm}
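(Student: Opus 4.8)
The plan is to route everything through the Moore--Penrose pseudoinverse $L(G)^{+}$ of the Laplacian and then collapse the double sum over pairs into a trace. First I would recall the electrical-network reading of $r_{ij}$: injecting one unit of current at node $i$ and withdrawing it at node $j$ corresponds, via Ohm's and Kirchhoff's laws, to solving the potential equation $L(G)\,x = \epsilon_{ij}$, where $\epsilon_{ij}$ denotes the vector with $+1$ in position $i$, $-1$ in position $j$, and $0$ elsewhere. The effective resistance between $i$ and $j$ is then the induced potential difference, which can be written as the quadratic form $r_{ij} = \epsilon_{ij}^{T} L(G)^{+}\epsilon_{ij} = (L(G)^{+})_{ii} + (L(G)^{+})_{jj} - 2(L(G)^{+})_{ij}$. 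Since $G$ is connected, $0$ is a simple eigenvalue of $L(G)$ with (normalized) eigenvector $\tfrac{1}{\sqrt{n}}J_{n\times 1}$, and the remaining eigenvalues $\mu_2(G),\ldots,\mu_n(G)$ are strictly positive, so the spectral decomposition $L(G)=\sum_{i=2}^{n}\mu_i(G)\,\phi_i\phi_i^{T}$ gives $L(G)^{+}=\sum_{i=2}^{n}\mu_i(G)^{-1}\phi_i\phi_i^{T}$, a well-defined symmetric matrix.

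Next I would sum the quadratic-form expression over all unordered pairs. Symmetrizing the sum and separating the diagonal from the off-diagonal contributions yields
\[
Kf(G)=\sum_{i<j} r_{ij}=\tfrac{1}{2}\sum_{i,j}\bigl[(L(G)^{+})_{ii}+(L(G)^{+})_{jj}-2(L(G)^{+})_{ij}\bigr]=n\,\mathrm{tr}\bigl(L(G)^{+}\bigr)-J_{1\times n}\,L(G)^{+}\,J_{n\times 1}.
\]
The crucial simplification is that every eigenvector $\phi_i$ with $i\geq 2$ is orthogonal to the all-ones vector, so $L(G)^{+}J_{n\times 1}=0$ and the second term vanishes. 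Finally, reading the trace off the spectral decomposition gives $\mathrm{tr}(L(G)^{+})=\sum_{i=2}^{n}\mu_i(G)^{-1}$, and substituting this produces the stated formula $Kf(G)=n\sum_{i=2}^{n}\mu_i(G)^{-1}$.

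I expect the main obstacle to be the first step rather than the algebra: carefully justifying that the network-theoretic resistance distance $r_{ij}$, defined physically through Ohm's law, really equals the quadratic form $\epsilon_{ij}^{T}L(G)^{+}\epsilon_{ij}$. This requires checking that the node-potential equation $L(G)x=\epsilon_{ij}$ is consistent (its right-hand side lies in the range of $L(G)$ because $\epsilon_{ij}\perp J_{n\times 1}$), that the pseudoinverse selects the particular solution orthogonal to the kernel, and that the resulting potential difference is independent of the additive constant of the potentials. Connectivity is used twice here: to guarantee the unique nonzero harmonic structure of the network and to ensure $\mu_i(G)>0$ for $i\geq 2$, so that the sum on the right-hand side is finite and the formula is meaningful. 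Once this identity is in place, the remainder is the routine trace computation sketched above.
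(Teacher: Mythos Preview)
Your argument is correct and is the standard route to this identity (essentially the Klein--Randi\'c/Gutman--Mohar proof via the Moore--Penrose inverse of $L(G)$). Note, however, that the paper does not actually supply its own proof of this statement: Theorem~\ref{kf formula} is quoted as a known lemma from \cite{gao2012} and is used as a black box in the applications section, so there is no in-paper proof to compare against. Your sketch would serve perfectly well as a self-contained justification; the only point I would tighten is the opening identification $r_{ij}=\epsilon_{ij}^{T}L(G)^{+}\epsilon_{ij}$, which you correctly flag as the one non-routine step and which is itself a cited fact in most treatments.
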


\begin{cor}
	Let $G$ be an $r-$regular graph with $V(G)=\{v_1,v_2,\ldots,v_n\}$ and $H$  be a graph with $V(H)=\{u_1,u_2,\ldots,u_n\}$ which commutes with $G$. Then we have the following.
	\begin{enumerate}[(1)]	
		\item  $Kf\left([S(G)]^{H} \right) = \displaystyle \frac{n}{2}+\frac{m^2-n^2}{2}+k_1\displaystyle\sum_{i=2}^{n}\left(\frac{k_2+\mu_i(H)}{2\mu_i(H)+\mu_i(G)} \right) $,
		\item  $Kf\left([S(G)]_{K_m}^{H}\right)=\displaystyle \frac{n}{2}+\frac{m^2-n^2}{m+2}+
		k_1\displaystyle\sum_{i=2}^{n}\left(\frac{m+k_2+\mu_i(H)}{m(r+\mu_i(H))+2\mu_i(H)+\mu_i(G)} \right)$,
		\item  $Kf\left([S(G)]_{\overline{\mathcal L(G)}}^{H} \right) =\displaystyle \frac{n}{2}+\frac{m^2-n^2}{k_3}+k_1\displaystyle\sum_{i=2}^{n}\left(\frac{m+k_2+\mu_i(H)-\mu_i(G)}{[m-\mu_i(G)][r+\mu_i(H)]+2\mu_i(H)+\mu_i(G)} \right)$,
		
		\item If $H$ is a graph with $m$ vertices, then  $$Kf([S(K_{1,m})]_H)=m+3+(2m+1)\sum_{j=2}^m\left( \frac{\mu_j(H)+3}{\mu_j(H)+1}\right) .$$
	\end{enumerate}
	where $k_1=m+n$, $k_2=r+2$ and $k_3=m-2r+2$.
\end{cor}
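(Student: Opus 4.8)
The plan is to apply the Kirchhoff-index formula of Theorem~\ref{kf formula} directly to the Laplacian spectra already computed in Corollary~\ref{lspectrum of hmerged subdivision} (parts (1), (2), (4)) and in Theorem~\ref{lchpolygensubstar}(2). Since $[S(G)]^{H_1}_{H_2}$ has $n+m=k_1$ vertices (and $[S(K_{1,m})]_H$ has $2m+1$ vertices), in each part I must evaluate $Kf=N\sum_{\mu\neq 0}\mu^{-1}$, where $N$ is the relevant vertex count and the sum runs over the nonzero Laplacian eigenvalues. So the entire argument is a reorganization of a known spectrum, not a new spectral computation.

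First I would split each spectrum into three groups: the exceptional eigenvalues $0$ and $r+2$, the repeated eigenvalue (of multiplicity $m-n$, equal to $2$, $m+2$, or $m-2r+2$ according to the case), and the $n-1$ conjugate pairs indexed by $i=2,\dots,n$. Each conjugate pair is the root set $\mu_{i,\pm}$ of a quadratic $x^2-S_ix+P_i$, so by Vieta's formulas $\mu_{i,+}^{-1}+\mu_{i,-}^{-1}=S_i/P_i$, and no radical ever needs to be inverted. The key algebraic point is that the discriminant cancels cleanly: writing the linear coefficient as a sum $a+b$ and the radicand as $(a-b)^2+c$, the product of roots becomes $\tfrac14\bigl[(a+b)^2-(a-b)^2-c\bigr]=ab-\tfrac14 c$ by the difference-of-squares identity. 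This is precisely what produces the stated denominators $2\mu_i(H)+\mu_i(G)$, then $m(r+\mu_i(H))+2\mu_i(H)+\mu_i(G)$, and then $[m-\mu_i(G)][r+\mu_i(H)]+2\mu_i(H)+\mu_i(G)$, while the numerators simplify to $k_2+\mu_i(H)$, $m+k_2+\mu_i(H)$, and $m+k_2+\mu_i(H)-\mu_i(G)$ upon recalling $k_2=r+2$.

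Next I would simplify the exceptional and repeated contributions using the regularity constraint $m=\tfrac12 nr$, equivalently $r+2=2(m+n)/n$. This identity collapses the eigenvalue $r+2$ into the clean term $N\cdot\frac{1}{r+2}=(m+n)\cdot\frac{n}{2(m+n)}=\frac{n}{2}$, and turns the repeated-eigenvalue contribution $N\cdot\frac{m-n}{\lambda}$ into $\frac{m^2-n^2}{\lambda}$ for $\lambda\in\{2,\,m+2,\,m-2r+2\}$, giving the three reported second terms $\frac{m^2-n^2}{2}$, $\frac{m^2-n^2}{m+2}$, and $\frac{m^2-n^2}{k_3}$ with $k_3=m-2r+2$. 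Multiplying the summed conjugate-pair reciprocals by $N=k_1$ then yields the final $k_1\sum_{i=2}^{n}(\cdots)$ terms, completing parts (1)--(3). For part (4) the same mechanism applies with $N=2m+1$: the isolated pair has sum $m+3$ and (via the same difference-of-squares cancellation) product $2m+1$, so $N$ times its reciprocal sum is $m+3$, while each generic pair contributes $\frac{\mu_j(H)+3}{\mu_j(H)+1}$, giving the factor $(2m+1)\sum_{j=2}^{m}$.

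I do not expect a genuine obstacle; this is essentially a bookkeeping exercise. The only step demanding care is the product-of-roots computation, where one must identify the correct splitting $S_i=a+b$ matching the $(a-b)^2$ sitting inside each radicand, so that the difference of squares telescopes; an incorrect pairing would leave the radical uncancelled and break the closed form. A secondary point worth verifying explicitly is the vertex count $N$ in each case, since it is exactly this prefactor that converts the individual $\mu^{-1}$ sums into the advertised expressions.
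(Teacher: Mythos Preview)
Your proposal is correct and follows essentially the same approach as the paper's proof: apply Theorem~\ref{kf formula} to the Laplacian spectra from Corollary~\ref{lspectrum of hmerged subdivision} and Theorem~\ref{lchpolygensubstar}(2), use Vieta's formulas to collapse each conjugate pair's reciprocal sum to $S_i/P_i$, and invoke $m=\tfrac12 nr$ to simplify the exceptional and repeated-eigenvalue contributions. The paper carries this out explicitly only for part~(1), writing the pair sums as $a_i/b_i$ with $a_i=r+\mu_i(H)+2$ and $b_i=2\mu_i(H)+\mu_i(G)$, and then declares parts~(2)--(4) analogous; your write-up is in fact more detailed than the paper's on the difference-of-squares cancellation, but the underlying argument is identical.
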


\begin{proof}
	\begin{enumerate}[(1)]
		
		\item Let $a_i=r+\mu_i(H)+2$, $b_i=2\mu_i(H)+\mu_i(G)$. Then by applying Corollary~\ref{lspectrum of hmerged subdivision}(1) in Theorem \ref{kf formula}, we have,
		\begin{eqnarray*}
			Kf\left([S(G)]^{H} \right) &=& \displaystyle \frac{m+n}{r+2}+\frac{m^2-n^2}{2}+(m+n)\displaystyle\sum_{i=2}^{n}\left(\frac{2}{a_i+\sqrt{a_i^2-4b_i}}+\frac{2}{a_i-\sqrt{a_i^2-4b_i}} \right)
			\\&=& \displaystyle \frac{nr+2n}{2(r+2)}+\frac{m^2-n^2}{2}+(m+n)\displaystyle\sum_{i=2}^{n}\left(\frac{a_i}{b_i} \right).
		\end{eqnarray*}

		\item [(2)--(4):] Proof is analogous to the proof of (1) and by using Corollary \ref{lspectrum of hmerged subdivision}(2), \ref{lspectrum of hmerged subdivision}(4) and \ref{lchpolygensubstar}, respectively in Theorem~\ref{kf formula}.			
	\end{enumerate}
\end{proof}

\section {Concluding remarks}

The new graph construction defined in this paper 
generalizes many existing graph operations and enables to define some more new unary graph operations which are
particular cases of the above construction.  From Corollary 3.1, 
the $L$-spectra of the graphs obtained by the existing as well as new unary graph operations mentioned above were
readily derived. 

The determination of spectra of 	$[S(G)]^{H_1}_{H_2}$ for the classes of graphs $G$, $H_1$ and $H_2$ which are not considered in this paper is an interesting problem. Also, the determination of the spectra of other graph matrices of these graphs is further research topic in this direction.  The study of other graph theoretic properties of the graph constructed by this ternary graph operation as well as these new
unary graph operations needs further research.
\section*{Acknowledgment}
The authors would like to thank the referee for his/her useful comments and suggestions. The second author is supported by INSPIRE Fellowship, Ministry of Science and Technology, Government of India under the Grant no. DST/INSPIRE Fellowship/[IF150651] 2015.


\begin{thebibliography}{00}
	
	\bibitem{akbari2007} S. Akbari and A. Herman, Commuting decompositions of complete graphs, {\it J. Comb. Des.} 15 (2) (2007), 133--142.
	
	\bibitem{akbari2009} S. Akbari, F. Moazami and A. Mohammadian, Commutativity of the adjacency matrices of graphs, {\it Discrete Math.} 309 (2009), 595--600.
	
	\bibitem{bapat2010} R. B. Bapat, {\it Graphs and Matrices},  Second Edition, Springer, London, Hindustan Book Agency,
	New Delhi, 2014.
	
	
	
	\bibitem{barik2007}S. Barik, S. Pati, and B. K. Sarma, The spectrum of the corona of two graphs {\it SIAM J. Discrete Math.},  21 (1), (2007), 47--56.
	
	
	\bibitem{barik2015}S. Barik, R. B. Bapat, S. Pati, On the Laplacian spectra of product graphs, {\it Appl. Anal. Discrete Math.} 9 (2015), 39--58.
	
	\bibitem{beezer1984} R. A. Beezer, On the polynomial of a path. {\it Linear Algebra Appl.} 63 (1984), 221--
	225.
	
	\bibitem{brouwer2012} A. E. Brouwer, W. H. Haemers, {\it Spectra of Graphs}, Springer, New York, 2012.
	
	\bibitem{bonchev1994}D. Bonchev, A. T. Balaban, X. Liu and D. J. Klein, Molecular cyclicity and centricity of polycyclic graphs. I: cyclicity based on resistance distances or reciprocal
	distances, {\it Int. J. Quantum Chem.} 50 (1994), 1--20.
	

	\bibitem{cardoso2013}D. M. Cardoso, E. A. Martins, M. Robbiano, O. Rojo, Eigenvalues of a $H-$generalized join graph operation constrained by vertex subsets, {\it Linear Algebra Appl.}, 438 (2013), 3278--3290.
	
	\bibitem{cui2012} S. Y. Cui and G. X. Tian, The spectrum and the signless Laplacian spectrum of coronae, {\it Linear Algebra Appl.} 437 (2012), 1692--1703.
	
	

	
	
	\bibitem{cvetkovic1975}D. Cvetkovi\'c, Spectra of graphs formed by some unary operations, {\it Publ. L'Inst. Math.} 19 (33), (1975), 37--41.
	
		\bibitem{cvetkovic2010}D. Cvetkovi\'c, P. Rowlinson and S. Simi\'c, {\it An Introduction to Theory of Graph Spectra}, Cambridge University Press, New York, 2010.
		
\bibitem{cvetkovic2011}D. Cvetkovi\'c and S. Simi\'c, Graph spectra in computer science, {\it Linear Algebra Appl.} 434 (2011), 1545--1562.	
	
	
	\bibitem{fiedler1973} M. Fiedler, Algebraic connectivity of graphs, {\it Czechoslovak Math. J.} 23(98) (1973), 298--305.
	
	\bibitem{laali2016}A.R. Fiuj Laali, H. Haj Seyyed Javadi and Dariush Kiani, Spectra of generalized corona of graphs, {\it Linear Algebra Appl.}, 493 (2016) 411--425.
	
	\bibitem{gao2012}X. Gao, Y. Luo and W. Liu, Kirchhoff index in line, subdivision and total graphs of a regular graph, {\it Discrete Appl. Math.} 160 (2012), 560--565.
	
	
	
	
	
	
	\bibitem{hei2011}A. Heinze, Applications of Schur rings in algebraic combinatorics: Graphs, partial difference sets and cyclotomic schemes (Ph.D. dissertation),
	Universitat Oldenburg, 2001.
	
	\bibitem{hou2010}Y. Hou and W-C. Shiu, The spectrum of the edge corona of two graphs, {\it Electron J. Linear Algebra} 20(1) (2010), 586--594.
	
	
	
	\bibitem{klein1993}D. J. Klein and M. Randi\'c, Resistance distance, {\it J. Math. Chem.} 12 (1993), 81--95.
	
	\bibitem{lan2015}J. Lan and B. Zhou, Spectra of graph operations based on R-graphs, {\it Linear Multilinear Algebra} 63(7) (2015), 1401--1422.
	
	\bibitem{qliu2016}Q. Liu, Jia-Bao Liu, J. Cao, The Laplacian polynomial and Kirchhoff index of graphs based on R-graphs, {\it Neurocomputing}, 177 (2016), 441--446.
	
	438 (2013), 3547--3559.
	
	\bibitem{liu2014}X. Liu and S. Zhou, Spectra of the neighborhood corona of two graphs, {\it Linear Multilinear Algebra}, 62:9 (2014), 1205--1219
	
	\bibitem{liu2017} X. Liu and Z. Zhang, Spectra of subdivision-vertex and subdivision-edge join of graphs, {\it Bull. Malays. Math. Sci. Soc.}, (2017), 1--17.
	
	
	
	
	\bibitem{mcleman2011}C. McLeman and E. McNicholas, Spectra of coronae, {\it Linear Algebra Appl.} 435 (2011), 998--1007.
	
	
	\bibitem{somody2017}M. Somodi , K. Burke, J. Todd, On a construction using commuting regular graphs, {\it Discrete Math.}, 340 (2017), 532--540.
	
	\bibitem{wang20131} S. L. Wang and B. Shou, The signless Laplacian spectra of the corona and edge corona of two graphs, {\it Linear Multilinear Algebra}, 61(2) (2013), 197--204.
	
	\bibitem{wang20132} W. Wang, D. Yang and Y. Luo, The Laplacian polynomial and Kirchhoff index of graphs derived from regular graphs, {\it Discrete Appl. Math.} 161 (2013), 3063--3071.
	
	\bibitem{xiao2003}W. J. Xiao and I. Gutman, Resistance distance and Laplacian spectrum, {\it Theor. Chem. Acc.} 110 (2003), 284--289.
	
	\bibitem{xie2016}P. Xie, Z. Zhang and F. Comellas, The normalized Laplacian spectrum of subdivisions of a graph, {\it Appl. Math. Comput.} 286 (2016), 250--256.
	
	\bibitem{yang2014} Y. Yang, The Kirchhoff index of subdivisions of graphs, {\it Discrete Appl. Math.}, 171 (2014), 153--157.
	
	\bibitem{zhang2013}Z. Zhang, Some physical and chemical indices of clique-inserted lattices, {\it J. Stat.
		Mech.: Theory Exp.} 10 (2013), 1--12.
	
	
	\bibitem{zhou2008}B. Zhou and N. Trinajsti\'{c}, A note on Kirchhoff index, {\it Chem. Phys. Lett.} 455 (2008), 120--123.
	
	
\end{thebibliography}
\end{document}